\newtheorem{defn}[theorem]{Definition}
\newcounter{tenumerate}
\def\P{\mathbb{P}}
\renewcommand{\epsilon}{\varepsilon}
\DeclareMathOperator{\var}{Var} 
\DeclareMathOperator{\arccoth}{arccoth}
\newcommand{\E}{{\mathbb E}}
\newcommand{\remove}[1]{}
\renewcommand{\leq}{\leqslant}
\renewcommand{\geq}{\geqslant}
\def\XXint#1#2#3{{\setbox0=\hbox{$#1{#2#3}{\int}$}
		\vcenter{\hbox{$#2#3$}}\kern-.5\wd0}}
\journalname{Journal of Theoretical Probability}
\begin{document}

\title{Phase transition for accessibility percolation on hypercubes}
\thanks{Partially supported by NSF grant DMS-1313596.}
\author{Li Li}

\institute{Li Li \at
              University of Chicago \\
              Tel.: +1-607-3794673\\
              \email{lili@galton.uchicago.edu}
}

\date{Received: date / Accepted: date}

\maketitle

\begin{abstract}
In this paper, we consider accessibility percolation on hypercubes, i.e., we place i.i.d.\ uniform $[0,1]$ random variables on vertices of a hypercube, and study whether there is a path connecting two vertices such that the values of these random variables increase along the path. We establish a sharp phase transition depending on the difference of the values at the two endpoints, and determine the critical window of the phase transition. Our result completely resolves a conjecture of Berestycki, Brunet and Shi (2014).

\keywords{Accessibility percolation \and Hypercube \and Phase transition \and Second moment method}
\subclass{MSC 60J80 \and MSC 60G18}
\end{abstract}

\section{Introduction}

For $N\in \mathbb N$, let $H_N= \{0,1\}^N$ be a hypercube where two vertices are connected by an \emph{undirected} edge if their Hamming distance, i.e. the number of coordinates at which they differ, is precisely 1. Let $\{X_v: v\in H_N\}$ be i.i.d.\ random variables uniformly distributed in $[0,1]$. We say that a path in $H_N$ is accessible if the associated random variables $X_v$'s are increasing along the path. For $u, w\in H_N$, we say that $w$ is accessible from $u$ if there exists at least one accessible path from $u$ to $w$. In this paper, we show that the conditional accessible probability (from $u$ to $w$) given that $X_{u}=a$ and $X_{w}=b$ ($0\leq a<b\leq 1$) admits a sharp phase transition, in a sense made precise in Theorem \ref{thm-main} below. By symmetry, the conditional accessible probability with fixed $a$ and $b$ depends only on the Hamming distance between $u$ and $w$. Therefore, we fix $0<\beta \leq 1$ and without loss of generality consider the case when $u=(0,0,\cdots,0)$ and $w=(1,1,\cdots,1,0,0,\cdots,0)$ (here the number of 1's in $w$ is $[\beta N]$). Furthermore, since subtracting $a$ from all $X_v$'s does not change the accessibility from $u$ to $w$, we can also assume without loss of generality that $a = 0$ and $b = x$ (where $x$ may depend on $N$). Our main result is summarized in the following theorem.

\begin{theorem} \label{thm-main}
	Let $f(x)=(\sinh x)^\beta(\cosh x)^{1-\beta}$, and let $x_0$ be the unique number such that $f(x_0) = 1$. Define
	$x_c(N)=x_0-{\frac 1{f'(x_0)}}{\frac {\ln N}N}$. For any sequence $\epsilon_N$ such that $N \epsilon_N  \to \infty$, we have
	\begin{align}
	&\lim\limits_{N\to \infty} \P( w \mbox{ is accessible from } u \mid X_{u} = 0, X_{w} = x_c-\epsilon_N)=0\,,  \label{eq-mainthm-upper}\\
	&\lim\limits_{N\to \infty} \P( w \mbox{ is accessible from } u \mid X_{u} = 0, X_{w} = x_c+\epsilon_N)=1\,. \label{eq-mainthm-lower}
	\end{align}
	In addition, for all $\Delta>0$, there exist $0<c_1<c_2<1$ (where $c_1$ and $c_2$ depend only on $\Delta$) such that for all $N\in \mathbb N$
	\begin{equation}\label{eq-mainthm-critical}
	c_1 \leq \P( w \mbox{ is accessible from } u \mid X_{u} = 0, X_{w} = x_c + \epsilon_N) \leq c_2\,, \mbox{ if }  |N\epsilon_N| \leq \Delta\,.
	\end{equation}
\end{theorem}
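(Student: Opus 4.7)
The plan is to apply first- and second-moment methods to the random variable $Z$ counting accessible paths from $\bm u$ to $\bm w$. For each $k\geq 1$, the number of length-$k$ walks from $\bm u$ to $\bm w$ in $H_N$ equals $N_k=(A^k)_{\bm u,\bm w}$, where $A$ is the adjacency matrix of $H_N$; conditionally on $X_{\bm u}=0$ and $X_{\bm w}=x$, each such walk is accessible with probability $x^{k-1}/(k-1)!$, namely the probability that $k-1$ i.i.d.\ uniform $[0,1]$ variables lie in $(0,x)$ in increasing order. Hence
\begin{equation*}
\E[Z] = \sum_{k\geq 1} N_k \, \frac{x^{k-1}}{(k-1)!} = \frac{d}{dx}\bigl[(e^{xA})_{\bm u,\bm w}\bigr].
\end{equation*}
Diagonalising $A$ in the Fourier basis $\chi_S(v)=(-1)^{\sum_{i\in S}v_i}$ (eigenvalues $N-2|S|$) yields, after a direct character computation, $(e^{xA})_{\bm u,\bm w}=(\sinh x)^m(\cosh x)^{N-m}=f(x)^N$ with $m=[\beta N]$, so $\E[Z]=Nf'(x)f(x)^{N-1}$. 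Taylor expanding at $x_0$ gives $\E[Z]\sim Nf'(x_0)e^{N(x-x_0)f'(x_0)}$; the unique scaling that makes this of constant order is $x-x_0\approx -(\ln N)/(Nf'(x_0))$, which is precisely $x_c(N)$.

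\textbf{Subcritical regime.} For $x=x_c-\epsilon_N$ with $N\epsilon_N\to\infty$, the formula above gives $\E[Z]\sim f'(x_0)e^{-N\epsilon_N f'(x_0)}\to 0$, and Markov's inequality $\P(Z\geq 1)\leq \E[Z]$ yields \eqref{eq-mainthm-upper} immediately.

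\textbf{Second moment: supercritical regime and lower critical bound.} The core of the argument is to estimate $\E[Z^2]=\sum_{P,Q}\P(P,Q\text{ both accessible})$, stratified by the set $P\cap Q$ of shared vertices. Conditioning on the values at shared vertices factorises each joint probability into simplex-volume integrals on the arcs of $P$ and $Q$ between consecutive intersection vertices; these arcs are then resummed using the same spectral identity, but applied on the appropriate sub-hypercubes. The target estimates are $\E[Z^2]\leq (1+o(1))(\E[Z])^2$ above the critical window and $\E[Z^2]\leq C(\Delta)(\E[Z])^2$ inside it. Paley--Zygmund then gives $\P(Z\geq 1)\geq 1-o(1)$ above, proving \eqref{eq-mainthm-lower}, and $\P(Z\geq 1)\geq 1/C(\Delta)$ inside, which is the lower bound $c_1$ in \eqref{eq-mainthm-critical}.

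\textbf{Upper bound in the critical window, and main obstacle.} For $c_2<1$ in \eqref{eq-mainthm-critical}, Markov is insufficient since $\E[Z]$ may exceed $1$; instead I would show that $Z$ is asymptotically close to a Poisson variable of mean $\E[Z]$, yielding $\P(Z=0)\geq e^{-\E[Z]}(1-o(1))>0$ uniformly in $\Delta$. A natural tool is the Chen--Stein method (or a truncated Bonferroni bound), both of which reduce to the same path-pair-overlap sums controlled by the second moment. The principal technical difficulty throughout is the combinatorial bookkeeping: typical accessible paths have length $\Theta(N)$, pairs may share vertices in intricate patterns, and one must stratify by overlap shape and re-apply the spectral identity on sub-hypercubes to control each stratum; a secondary obstacle is ruling out contributions from atypically short or long paths, which requires truncation estimates prior to the main bounds.
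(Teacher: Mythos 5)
Your first-moment derivation is correct (the spectral identity $(e^{xA})_{\bm u,\bm w}=f(x)^N$ is just a closed-form version of the paper's Lemma~\ref{lem-M-k-n}), except that the equality $\E[Z]=\sum_k N_k\,x^{k-1}/(k-1)!$ should be an inequality: a walk that revisits a vertex is accessible with probability $0$, not $x^{k-1}/(k-1)!$, so summing over all walks only gives an upper bound. That inequality is all you need for the subcritical part, which is fine.

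The genuine gap is in the supercritical regime and it is structural, not cosmetic. You aim to show $\E[Z^2]\le(1+o(1))(\E[Z])^2$ above the window and then apply Paley--Zygmund to conclude $\P(Z\ge1)\to1$. This ratio does not tend to $1$. Even after restricting to a carefully controlled subfamily of paths, the paper proves only $\E Z_{\mathrm{good}}^2\asymp(\E Z_{\mathrm{good}})^2$ with an explicit multiplicative constant strictly larger than $1$; the constant comes from a product over path positions of factors $(1+C_3/N)$, giving $e^{C_3\alpha}>1$, and reflects the real positive correlation between accessible paths through shared level structure. Paley--Zygmund therefore yields only $\P(Z>0)\ge c>0$, not $1-o(1)$. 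To get probability tending to $1$ the paper then needs a separate \emph{augmenting} (bootstrap) argument in the spirit of Hegarty--Martinsson: select several near-$\vec0$ and near-$\vec1$ vertices with values close to the endpoints, run the constant-probability event independently in two disjoint sub-hypercubes, and iterate. Your outline has no substitute for this step, and without it $\eqref{eq-mainthm-lower}$ does not follow. Moreover, for the raw count $Z$ (all self-avoiding accessible paths, unrestricted) the second-moment-to-first-moment-squared ratio is not controlled at all; the paper must first pass to \emph{good} paths, meaning self-avoiding paths satisfying a detailed Hamming-distance profile (short-range no backtracking, mid-range linear growth, long-range antipodality), show that this restriction loses only a constant factor in the first moment (Lemma~\ref{lem-first-moment}), and only then run the second moment. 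You flag ``ruling out atypically short or long paths'' as a secondary obstacle, but the restriction needed is much finer than a length truncation and is essential, not secondary.

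The same overdispersion undermines your proposed Chen--Stein route for the upper bound in $\eqref{eq-mainthm-critical}$: $Z$ (or $Z_{\mathrm{good}}$) is not approximately Poisson, since overlapping paths contribute a term of order $(\E Z)^2$ rather than $o\big((\E Z)^2\big)$. The paper instead conditions on all neighbours of $\vec0$ taking value at least $K/N$ (an event of probability $\to e^{-K}$) and shows the conditional first moment is then $O\big(e^{-cK}\big)$, so that on this event $Z=0$ with conditional probability at least $1/2$; this sidesteps Poisson approximation entirely. You would need either to implement that kind of conditioning or to prove a genuine Chen--Stein estimate, which the correlation structure makes unlikely to hold.
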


\begin{remark}
	A few days before the post of this article, we noted that a paper \cite{Martinsson15} was posted in January 2015, which proved the version of \eqref{eq-mainthm-lower} (without analyzing the critical window for the phase transition) for the case of $\beta \geq 0.002$. While we acknowledge the priority of \cite{Martinsson15}, we emphasize that our work was carried out independently; our method is rather different and allows us to derive the result for all $0<\beta\leq 1$. 
\end{remark}

Accessibility percolation on hypercubes with backsteps (i.e., when the hypercube graph is undirected as we have assumed at the beginning) was studied in \cite{BBS14}, where they proved \eqref{eq-mainthm-upper} and conjectured \eqref{eq-mainthm-lower} (both in a slightly weaker form). Our Theorem~\ref{thm-main} completes the picture and describes a sharp phase transition for this problem.

An analogue of Theorem~\ref{thm-main} on accessibility percolation on hypercubes without backsteps (i.e., when the edges of the hypercube are directed toward the vertex with the greater number of ones) was established by  \cite{HM14}. Under the same setting, \cite{BBS13} gives the asymptotic distribution of the number of accessible paths when $x$ is in a different regime. Accessibility percolation has also been studied on N-ary trees \cite{NK13,RZ13,Chen14} and on spherically symmetric trees \cite{CGR14}. In addition, the Hamiltonian increasing path on the complete graph was studied in \cite{LL14}.

Our study on accessibility percolation is motivated by the NK fitness landscapes, which were introduced in \cite{KL87,Kauffman} as a class of models for biological evolution. In the NK fitness model, we consider $H_N$ corresponding to, e.g.,  nucleobases in a DNA sequence. Let $F$ be a distribution. Given $K\leq N$, let $Y_{i, \tau}$ be i.i.d.\ random variables with distribution $F$ for all $1\leq i\leq N$ and $\tau \in H_K$. For $\sigma\in H_N$,  the fitness of $\sigma$ is then defined to be
$X_\sigma = \mbox{$\sum_{i=1}^{N}$} Y_{i, (\sigma_i, \ldots, \sigma_{i+K-1})}$
(where the addition in the subscript is understood as modulo of $N$).  Since the gene favors better fitness, it is natural to consider an adaptive walk on space $H_N$ such that the corresponding fitness increases until the walk is frozen at a local maximum.  Theorem~\ref{thm-main} is a preliminary step toward understanding the adaptive walk on the NK fitness model. Indeed, our model (with i.i.d.\ fitness for each vertex in $H_N$) corresponds to the case when $K = N$ (the distribution $F$ does not play a role when considering increasing paths as long as $F$ is continuous).

\section{Accessibility percolation: antipodal case}

For clarity of presentation, in the current section we give a proof of Theorem~\ref{thm-main} in the antipodal case when $\beta=1$, i.e., when $u=\vec{0}=(0,0,\cdots,0)$ and $w=\vec{1}=(1,1,\cdots,1)$. In Section~\ref{sec:general}, we modify the arguments and give a proof of Theorem~\ref{thm-main} in the general case when $0<\beta<1$. In both sections, the probability measure $\P$ stands for the conditional probability given $X_{u}= 0$ and $X_{w} = x$, unless otherwise specified. Recall that a path from $u$ to $w$ is accessible if the $X_v$'s (including $X_{u}$ and $X_{w}$) along the path are increasing. Denote by $Z_{N,x}$ the number of such accessible paths. Throughout the paper, we sometimes write \emph{with high probability} for brevity to mean with probability tending to 1 as $N\to \infty$.

\subsection{Proof of the upper bound} \label{subsec:Proof of the upper bound}

In this subsection we give a proof of \eqref{eq-mainthm-upper} in the antipodal case (the general case is similar). Note that Lemma \ref{lem-M-k-n}  below (which implies \eqref{eq-upper-bound-first-moment} in Corollary \ref{cor-first-moment} and therefore \eqref{eq-mainthm-upper} in the general case) has already been proved in \cite{BBS14}. Here we give a different proof of Lemma \ref{lem-M-k-n}, by relating the original model to a more tractable one (i.e. $\mu_{k, n }$), and this connection will also be useful in later proofs. We start with a number of definitions.

\begin{defn}\label{def-path}
	We say that a path (not necessarily self-avoiding) in $H_N$ has length $\ell$ if it visits $(\ell-1)$ inner vertices (a vertex is counted each time it is visited, starting and ending points are excluded). For $n, \ell\in \mathbb N$, let $\mathcal  M(n,\ell)$ be the collection of paths (not necessarily self-avoiding) of length $\ell$ from $\vec{0}_N = (0,0,\cdots,0)$ to $ (\vec 1_{n}, \vec 0_{N-n}) = (1,1,\cdots,1,0,0,\cdots,0)$ (where there are $n$ 1's in $(\vec 1_{n}, \vec 0_{N-n})$). Write $M(n,\ell) = |\mathcal M(n,\ell)|$.
\end{defn}

\begin{defn}\label{def-sequence}
	For $n, \ell\in \mathbb N$, let $\mathcal S(n,\ell)$ be the collection of integer sequences $(a_1, \ldots, a_\ell) \in \{1, \ldots, N\}^\ell$ such that $|\{1\leq i\leq \ell: a_i = k\}|$ is odd for $1\leq k\leq n$ and even for $n+1\leq k\leq N$. In addition, for $1\leq k\leq N$, let $\mathcal S_k(n, \ell) \subseteq \mathcal S(n, \ell)$ contain all sequences in $\mathcal S(n, \ell)$ such that the last number $a_\ell$ is $k$ and let $\mathcal S_k(n) = \cup_{\ell \in \mathbb N} \mathcal S_{k}(n, \ell)$.
\end{defn}

For each path (not necessarily self-avoiding) $v_0, v_1, \ldots, v_\ell$ in $H_N$ of length $\ell$, we associate a sequence of integers $(a_1, \ldots, a_\ell)$ where $a_i$ is the coordinate at which $v_{i-1}$ and $v_i$ differ. We observe that the association is a bijection between $\mathcal M(n, \ell)$ and $\mathcal S(n, \ell)$.

\begin{remark}
	In the following we will sometimes call the sequence $(a_1, \ldots, a_\ell)$ an \emph{update sequence}, and each of the $a_i (1\leq i\leq \ell)$ an \emph{update} (so that there are $\ell$ updates in the update sequence $(a_1, \ldots, a_\ell)$).
\end{remark}

Let $F_1$ be a distribution supported on odd integers such that $F_1(2j+1) = \frac{x^{2j+1}}{{(2j+1)!\sinh x}}$ for all $j\geq 0$, and let $F_2$ be a distribution supported on even integers such that $F_2(2j) = \frac{ x^{2j}}{{(2j)!\cosh x}}$ for all $j\geq 0$. For a fixed $1\leq k\leq N$, let $U_i$ be i.i.d.\ random variables distributed as $F_1$ for $i\in \{1, \ldots, n\} \setminus \{k\}$  and independently let $U_i$ be i.i.d.\ random variables distributed as $F_2$ for $i\in \{n+1, \ldots, N\} \setminus \{k\}$, and let $U_k$ be another independent random variable with distribution $F_2$ if $1\leq k\leq n$ and with distribution $F_1$ if $n+1 \leq k\leq N$. Given the values of $U_1, \ldots, U_N$, we let $(A_1, \ldots, A_{L-1}, k)\in \{1,\ldots, N\}^L$ (where $L-1 = \sum_{i=1}^N U_i$) be a sequence uniformly at random subject to $|\{1\leq j \leq L-1: A_j = i\}| = U_i$. We denote by $\mu_{k, n }$ the probability measure of the random sequence $(A_1, \ldots, A_{L-1}, k)$.

\begin{lemma}\label{lem-mu-k-n}
	For $1\leq k\leq n \leq \ell$ and any sequence $(a_1, \ldots, a_{\ell -1}, k) \in \mathcal S_{k}(n, \ell)$, we have
	\begin{equation}\label{lem-mu-k-n-eq-1}
	\mu_{k, n} ((a_1, \ldots, a_{\ell-1}, k)) = \tfrac{x^{\ell-1}}{(\ell-1)!} \tfrac{1}{( \sinh x)^{n-1}}\tfrac{1}{ (\cosh x)^{N-n+1} }\,.
	\end{equation}
	Similarly, for $n+1\leq k\leq N$ and $\ell\geq n+2$, and any sequence $(a_1, \ldots, a_{\ell -1}, k) \in \mathcal S_{k}(n, \ell)$, we have
	\begin{equation}\label{lem-mu-k-n-eq-2}
	\mu_{k, n} ((a_1, \ldots, a_{\ell-1}, k)) = \tfrac{x^{\ell-1}}{(\ell-1)!} \tfrac{1}{( \sinh x)^{n+1}}\tfrac{1}{ (\cosh x)^{N-n-1} }\,.
	\end{equation}
\end{lemma}
\begin{proof}
	We only prove the first case. Let $n_i = |\{1\leq j\leq \ell -1 : a_j = i\}|$.  Then we have
	\begin{equation}\label{eq-mu-k-n}
	\mu_{k, n} ((a_1, \ldots, a_{\ell-1}, k))=  \mu_{k, n} (U_i = n_i \mbox{ for all } 1\leq i\leq N) \cdot \tfrac{\prod_{i=1}^N n_i!}{(\ell - 1)!}\,,
	\end{equation}
	where the second term on the right hand side counts the conditional probability of sampling $(a_1, \ldots, a_{\ell-1}, k)$ given $U_i = n_i$ for all $1\leq i\leq N$. By independence of $U_i$'s, we see that
	\begin{align*}
	\mu_{k, n}(U_i = n_i \mbox{ for all } 1\leq i\leq N)  & = \prod_{i=1}^N \mu_{k, n}(U_i = n_i) = \prod_{1\leq i \neq k \leq n} F_1(n_i) \cdot \prod_{n+1\leq i\leq N} F_2(n_i) \cdot F_2(n_k) \\
	& = \prod_{1\leq i\neq k\leq n} \frac{x^{n_i}}{n_i! \sinh x}  \cdot \prod_{n+1\leq i\leq N} \frac{x^{n_i}}{n_i! \cosh x} \cdot \frac{x^{n_k}}{n_k! \cosh x}\\
	& = x^{\ell-1} \tfrac{1}{\prod_{i=1}^N n_i!}  \tfrac{1}{( \sinh x)^{n-1}}\tfrac{1}{ (\cosh x)^{N-n+1} }\,.
	\end{align*}
	Combined with \eqref{eq-mu-k-n}, this completes the proof of the first part of the lemma. The second part is similar. \qed
\end{proof}

\begin{lemma}\label{lem-M-k-n}
	We have
	\begin{equation}
	\sum_{\ell=1}^{\infty} M(n,\ell){x^\ell\over \ell!}=(\sinh x)^n(\cosh x)^{N-n}\,.
	\end{equation}
	In addition, we have
	\begin{equation}
	\begin{aligned}
	\label{lem-M-k-n2}
	\sum_{\ell=1}^{\infty} M(n,\ell){x^{\ell-1}\over (\ell-1)!} &=((\sinh x)^n (\cosh x)^{N-n})'  \\
	&=(\sinh x)^{n-1} (\cosh x)^{N-n-1} (n(\cosh x)^2+(N-n)(\sinh x)^2)\,.
	\end{aligned}
	\end{equation}
\end{lemma}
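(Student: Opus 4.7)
The plan is to reduce the identity to the standard exponential formula via the bijection already established between $\mathcal M(n,\ell)$ and $\mathcal S(n,\ell)$. Under this bijection, $M(n,\ell)$ equals the number of sequences $(a_1,\ldots,a_\ell)\in\{1,\ldots,N\}^\ell$ such that the multiplicity of $k$ is odd for $1\le k\le n$ and even for $n+1\le k\le N$. Grouping such sequences by the multiplicity vector $(n_1,\ldots,n_N)$ gives the multinomial count
$$M(n,\ell)=\sum_{\substack{n_1+\cdots+n_N=\ell\\ n_i\text{ odd},\, i\le n\\ n_i\text{ even},\, i>n}}\frac{\ell!}{n_1!\cdots n_N!}.$$

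Next I would divide by $\ell!$, sum over $\ell\ge 0$, and swap the order of summation. The constraint $n_1+\cdots+n_N=\ell$ disappears once $\ell$ is summed out, so the right-hand side factors as
$$\sum_{\ell\ge 0}M(n,\ell)\frac{x^\ell}{\ell!}=\prod_{i=1}^{n}\Bigl(\sum_{j\text{ odd}}\frac{x^j}{j!}\Bigr)\prod_{i=n+1}^{N}\Bigl(\sum_{j\text{ even}}\frac{x^j}{j!}\Bigr)=(\sinh x)^n(\cosh x)^{N-n},$$
proving the first identity.

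For the second identity, I would simply differentiate both sides of the first with respect to $x$. Term-by-term differentiation of the EGF yields $\sum_\ell M(n,\ell)x^{\ell-1}/(\ell-1)!$, while the product rule applied to $(\sinh x)^n(\cosh x)^{N-n}$ gives
$$n(\sinh x)^{n-1}(\cosh x)^{N-n+1}+(N-n)(\sinh x)^{n+1}(\cosh x)^{N-n-1},$$
which factors as $(\sinh x)^{n-1}(\cosh x)^{N-n-1}\bigl(n(\cosh x)^2+(N-n)(\sinh x)^2\bigr)$, matching the stated expression.

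There is no real obstacle here: the argument is a direct application of the exponential formula for labelled structures, exactly as in the standard derivation that $\sinh x$ and $\cosh x$ are the EGFs of odd- and even-size sets. The only bookkeeping point worth being careful about is that the derivative identity is stated with the convention that the $\ell=0$ term on the left contributes $0$ (since $1/(-1)!=0$ in the usual EGF convention), which is consistent with differentiating termwise.
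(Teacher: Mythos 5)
Your proof is correct, but it takes a genuinely different route from the paper. You prove the first identity directly by factoring the multinomial sum over multiplicity vectors into a product of the odd/even exponential generating functions $\sinh x$ and $\cosh x$, and then obtain the second identity by differentiating term by term. The paper instead proves the second identity first, by observing that $\mu_{k,n}$ is a probability measure on $\mathcal S_k(n)$ (so its total mass is $1$), combining this with the explicit per-sequence mass from Lemma~\ref{lem-mu-k-n} to get $\sum_{\ell} M_k(n,\ell)\frac{x^{\ell-1}}{(\ell-1)!}$ in closed form for each $k$, and summing over $k$ using $M(n,\ell)=\sum_k M_k(n,\ell)$; the first identity is then said to follow ``in the same manner.'' Your argument is more elementary and self-contained (it needs only the product formula for EGFs and termwise differentiation, which is justified here since the series converge everywhere), whereas the paper's derivation leans on the probabilistic setup of $\mu_{k,n}$, which is introduced primarily because it is reused in the proof of Lemma~\ref{lem-first-moment}. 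Both yield the stated identities.
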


\begin{proof}
	We give a proof of the second equality. The first equality can be obtained by integrating the second equality with respect to $x$.
	
	Since $\mu_{k, n}$ is a probability measure on $\mathcal S_k(n)$, we see that
	$\sum_{\vec a \in \mathcal S_k(n)} \mu_{k, n} (\vec a)=1$. Combined with Lemma~\ref{lem-mu-k-n}, it yields that when $1\leq k\leq n$
	$$1=\sum_{\ell = n}^\infty\sum_{\vec a \in \mathcal S_k(n, \ell)} \mu_{k, n}(\vec a)  = \sum_{\ell = n}^\infty |\mathcal S_k(n, \ell)| \tfrac{x^{\ell-1}}{(\ell-1)!} \tfrac{1}{( \sinh x)^{n-1}}\tfrac{1}{ (\cosh x)^{N-n+1} }\,, $$
	and when $n+1\leq k\leq N$
	$$1=\sum_{\ell = n+2}^\infty\sum_{\vec a \in \mathcal S_k(n, \ell)} \mu_{k, n}(\vec a)  = \sum_{\ell = n+2}^\infty |\mathcal S_k(n, \ell)| \tfrac{x^{\ell-1}}{(\ell-1)!} \tfrac{1}{( \sinh x)^{n+1}}\tfrac{1}{ (\cosh x)^{N-n-1} }\,. $$
	This tells us that when $1\leq k\leq n$
	$$ \sum_{\ell = n}^\infty |\mathcal S_k(n, \ell)|\tfrac{x^{\ell-1}}{(\ell-1)!}=( \sinh x)^{n-1} (\cosh x)^{N-n+1} \,,$$
	and when $n+1\leq k\leq N$
	$$ \sum_{\ell = n+2}^\infty  |\mathcal S_k(n, \ell)|\tfrac{x^{\ell-1}}{(\ell-1)!}=( \sinh x)^{n+1} (\cosh x)^{N-n-1} \,.$$
	Summing these $N$ equalities (combined with the fact that $M(n, \ell) = |\mathcal M(n,\ell)| = |\mathcal S(n, \ell)| = \sum_{1\leq k\leq N} |\mathcal S_k(n, \ell)|$) completes the proof of \eqref{lem-M-k-n2} and hence the lemma.\qed
\end{proof}

\begin{corollary}\label{cor-first-moment}
	$\E Z_{N,x}\leq N (\sinh x)^{N-1}\cosh x$.
\end{corollary}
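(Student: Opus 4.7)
The bound should follow essentially immediately from Lemma~\ref{lem-M-k-n} applied with $n = N$, so the work is mostly bookkeeping. My plan has three steps.

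\textbf{Step 1: Reduce to self-avoiding paths.} An accessible path requires the values $X(v_0), X(v_1),\dots, X(v_\ell)$ to be strictly monotone increasing. Any repeated inner vertex would force two equal values at distinct positions of a strictly increasing sequence, which is impossible. Hence accessible paths are automatically self-avoiding, and $Z_{N,x}$ is bounded by the total number of accessible paths in $\mathcal M(N,\ell)$ (no self-avoidance required in $\mathcal M$, but this can only increase the count).

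\textbf{Step 2: Compute the per-path accessibility probability.} Fix a (self-avoiding) path $v_0 = \vec 0, v_1, \dots, v_{\ell-1}, v_\ell = \vec 1$ of length $\ell$. Under the conditioning $X(\vec 0)=0$, $X(\vec 1)=x$, the $\ell-1$ inner vertices carry i.i.d.\ Uniform$[0,1]$ labels, and the path is accessible iff these labels are strictly increasing and all lie in $(0,x)$. This event has probability $\tfrac{x^{\ell-1}}{(\ell-1)!}$ (the probability that $\ell-1$ i.i.d.\ uniforms land in $(0,x)$ is $x^{\ell-1}$, and conditional on this the increasing order has probability $1/(\ell-1)!$).

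\textbf{Step 3: Sum and invoke Lemma~\ref{lem-M-k-n}.} Putting the two steps together,
\[
\E Z_{N,x} \;\leq\; \sum_{\ell=0}^\infty M(N,\ell)\,\frac{x^{\ell-1}}{(\ell-1)!}.
\]
The right-hand side is precisely the expression evaluated in \eqref{lem-M-k-n2} with $n=N$, namely
\[
\bigl((\sinh x)^N (\cosh x)^{0}\bigr)' \;=\; N(\sinh x)^{N-1}\cosh x,
\]
which gives the claimed bound.

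There is no real obstacle here; the only minor care needed is the observation in Step~1 that we may pass from self-avoiding paths (which correspond to genuine accessibility) to the larger class $\mathcal M(N,\ell)$ counted by $M(N,\ell)$ at the cost of only an inequality — this is exactly why the corollary is stated as an inequality rather than an equality.
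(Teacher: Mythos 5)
Your proof is correct and follows essentially the same route as the paper: observe accessibility forces self-avoidance, compute the per-path probability $x^{\ell-1}/(\ell-1)!$ for a self-avoiding path of length $\ell$, and bound $\E Z_{N,x}$ by the sum over all (not necessarily self-avoiding) paths, which is evaluated by \eqref{lem-M-k-n2} at $n=N$. The only cosmetic difference is that the paper carries out the computation for general Hamming distance $n$ and then substitutes $n=N$, whereas you specialize to $n=N$ from the start.
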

\begin{proof}
	Here we will derive an upper bound for $\E Z_{N, x}$ in the general (not necessarily antipodal) case. Suppose the Hamming distance between $u$ and $w$ is $n$. Let $\mathcal  M'(n,\ell)$ be the subset of self-avoiding paths in $\mathcal  M(n,\ell)$ and write $M'(n, \ell) = |\mathcal M'(n, \ell)|$. Since for each path $P\in \mathcal M'(n,\ell)$, the probability that $P$ is accessible is $\frac {x^{\ell-1}} {(\ell-1)!}$, we have
	\begin{align}
	\E Z_{N,x} &= \E \sum\limits_{\ell=1}^\infty\sum\limits_{P\in \mathcal M'(n,\ell)} 1_{P \mbox { is accessible }}= \sum\limits_{\ell=1}^\infty M'(n,\ell) \frac{x^{\ell-1}}{(\ell-1)!}
	\leq \sum\limits_{\ell=1}^\infty M(n,\ell) \frac{x^{\ell-1}}{(\ell-1)!} \nonumber\\
	&= (\sinh x)^{n-1} (\cosh x)^{N-n-1} (n(\cosh x)^2+(N-n)(\sinh x)^2)\,,\label{eq-upper-bound-first-moment}
	\end{align}
	where the last equality follows from \eqref{lem-M-k-n2}.
	In the antipodal case, substituting $n=N$ in \eqref{eq-upper-bound-first-moment} gives the desired bound.\qed
\end{proof}
\paragraph{Proof of \eqref{eq-mainthm-upper}: antipodal case}
	In this case, $\beta = 1$ so we have $f(x)=\sinh x$, $x_0=\sinh^{-1}(1)=\ln(\sqrt2+1)$, $\sinh x_0=1$ and $\cosh x_0=\sqrt2$.
	We can without loss of generality assume that $\epsilon_N\leq N^{-2/3}$ since $\P(Z_{N,x}>0)$ is increasing in $x$.  By Corollary~\ref{cor-first-moment}, we have (recall that  $x_c=x_0-{\frac 1{f'(x_0)}}{\frac {\ln N}N}=x_0-\frac {\sqrt 2}2 \frac {\ln N}N$)
	\begin{align*}
	\P(Z_{N,x_c-\epsilon_N}>0)
	&\leq\E Z_{N,x_c-\epsilon_N} \leq N(\sinh(x_c-\epsilon_N))^{N-1}\cosh (x_c-\epsilon_N)\\
	&= N(\sinh (x_0)-\cosh (x_0)(\frac {\sqrt 2}2 \frac {\ln N}N+\epsilon_N)+o(1/N))^{N-1}\cosh (x_c-\epsilon_N)\\
	&\leq N(1-\frac {\ln N}N-\sqrt2\epsilon_N+o(1/N))^{N-1}\sqrt2 \to 0 \mbox{ as }N\to \infty\,. 
	\end{align*}\qed

\begin{remark}\label{remark-3}
	Similarly we can show that for $x=x_c+\epsilon_N$ and $N\epsilon_N\to \infty$, we have $N(\sinh x)^{N-1}\cosh x=N(\sinh(x_c+\epsilon_N))^{N-1}\cosh (x_c+\epsilon_N)\to \infty$ as $N\to \infty$, and that for all $x=x_c+\epsilon_N$ such that $|N\epsilon_N|\leq \Delta$, we have $m_1(\Delta)\leq N(\sinh x)^{N-1}\cosh x\leq m_2(\Delta)$ where $m_1(\Delta), m_2(\Delta)>0$ depend only on $\Delta$.
	Combined with Lemma \ref{lem-first-moment} below, this suggests (at least in expectation) that $x_c$ is the critical value.
\end{remark}

\subsection{Proof of the lower bound}
In order to prove the lower bound, we restrict our attention to certain good paths, i.e., those with desirable properties on the growth of Hamming distances (in particular, a good path needs to be self-avoiding). We will define precisely what we mean by a good path in Definition \ref{def-good-antipodal} below. Denote by $Z_{N,x,*}$ the number of good accessible paths. Crucially, we demonstrate that with our definition of good paths, we have $\E Z_{N, x, *} \asymp \E Z_{N, x}$ and $\E Z_{N, x, *}^2 \asymp (\E Z_{N, x, *})^2$ (where $\asymp$ means that the left and right hand sides are within a constant multiplicative factor) as long as $x=x_c+\epsilon_N$ ($N \epsilon_N  \to \infty$) and $x$ stays in a fixed neighborhood of $x_0$. Thus, an application of the second moment method already yields the existence of an accessible path with probability bounded away from 0. Finally, we use the augmenting method as employed in \cite{HM14} to deduce the existence of an accessible path with probability tending to 1 as $N \to \infty$.

Recall that $x_0=\sinh^{-1}(1)=\ln(\sqrt2+1)\approx 0.88137$. Let $\alpha =x_0\coth x_0\approx 1.24645$.

For any $0<\epsilon<1$, we set $\epsilon_1, \epsilon_2 \mbox{ and } \epsilon_3$ throughout the rest of the paper as
\begin{equation}\label{eq-def-epsilons}
\epsilon_1 = \epsilon^{1/2}\,, \epsilon_2 = \epsilon^{1/4} \mbox{ and } \epsilon_3 = \epsilon^{1/8}\,.
\end{equation}
We will fix $\epsilon$ to be a certain sufficiently small number later.
%
For $u, v \in H_N$, we denote by $H(u, v)$ the Hamming distance between $u$ and $v$.
\begin{defn} \label{def-good-antipodal}
	Let $\epsilon>0$ be a sufficiently small fixed number to be selected.
	We say a path (or the associated update sequence) $v_0=\vec{0}, v_1, \ldots, v_{L-1},v_L=\vec{1}$ is \emph{good} if  $L\in[\alpha(1-\epsilon)N,\alpha(1+\epsilon)N]$ and the following holds:
	
	\noindent
	$H(v_i,v_j)=|i-j|, \text{ if } |i-j|=1,2,3;\\
	H(v_i,v_j)=|i-j| \text{ or } |i-j|-2,  \text{ if } 4\leq|i-j|\leq N^{\frac 1 5};\\
	H(v_i,v_j)\leq (1/2+\epsilon_1)N, \text{ if } N^{\frac 1 5}\leq |i-j|\leq\alpha(1/2+\epsilon)N;\\
	H(v_i,v_j)>(1/2+\epsilon_1)N, \text{ if } |i-j| > \alpha(1/2+\epsilon_2)N;\\
	H(v_i,v_j)\geq {\frac {|i-j|}{\alpha+\epsilon_3}}, \text{ if } N^{\frac 1 5}\leq|i-j|\leq \alpha(1/2+\epsilon_2)N.$
\end{defn}
It is clear from the definition that a good path is self-avoiding.

\begin{lemma}\label{lem-first-moment}
	For any sufficiently small but fixed number $\epsilon>0$, there exist $C_1>0$ and an integer $N'>0$ which both depend only on $\epsilon$, such that for all $|x - x_0| \leq \epsilon^2$ and $N>N'$ we have
	\begin{equation}\label{toshow}
	\E Z_{N,x,*}\geq C_1N\sinh^{N-1}x\cosh x\,.
	\end{equation}
\end{lemma}
\begin{proof}
	We keep all the definitions and notations in the previous subsection \ref {subsec:Proof of the upper bound}. Since we are working in the antipodal case where $\beta=1$, we have substituted $n$ by $N$ in the following without further notice. Recall that as stated in Definition \ref{def-good-antipodal}, an update sequence is good if its corresponding path is good. For each $1\leq k\leq N$, we let $\mathcal S_{k, *}(N) \subseteq \mathcal S_k(N)$ contain all the good sequences ending in $k$, and let $\mathcal M_{k, *}(N)$ be the collection of the corresponding good paths. We claim that in order to show \eqref{toshow}, it suffices to show that for each $1\leq k\leq N$
	\begin{equation}\label{eq-good}
	\mu_{k, N}(\mathcal S_{k, *}(N)) \geq C_1\,.
	\end{equation}
	Indeed, summing equation \eqref{lem-mu-k-n-eq-1} over all $(a_1, \ldots, a_{\ell -1}, k) \in \mathcal S_{k, *}(N)$ gives that
	$$\mu_{k, N}(\mathcal S_{k, *}(N))=\tfrac{1}{( \sinh x)^{N-1}\cosh x}\sum_{\substack {P\in \mathcal M_{k, *}(N) \\ P \mbox{ is of length }\ell}}\tfrac{x^{\ell-1}}{(\ell-1)!}=\tfrac{1}{( \sinh x)^{N-1}\cosh x}\sum_{P\in \mathcal M_{k, *}(N)} \P (P \mbox { is accessible})\,,$$
	where the last equality is because any good path is necessarily self-avoiding. If \eqref{eq-good} holds true, then summing the above equation over $1\leq k\leq N$ yields \eqref{toshow}.
	
	For ease of elaboration we make a slight modification to \eqref{eq-good}, that is, we will show instead that
	\begin{equation}\label{eq-good-1}
	\tilde \mu_{N}(\mathcal S_{*}(N)) \geq \tilde {C_1}\,,
	\end{equation}
	where $\tilde \mu_{N}$ differs from $\mu_{k,N}$ in that we also let $U_k$ be chosen according to $F_1$ instead of $F_2$ (in other words, for each $1\leq i\leq N$, the $U_i$'s are now i.i.d.\ random variables distributed as $F_1$), and consider the random sequence $(A_1, \ldots, A_{L-1})$ instead of $(A_1, \ldots, A_{L-1},k)$. See also Case 1 below for the definition of $\tilde \mu_{N,\beta}$, the generalization of $\tilde \mu_{N}$ to general $\beta$; we use $S_{*}(N)$ to denote the collection of all the good sequences (not necessarily ending in $k$).
	
	There are a number of ways to justify our replacement of \eqref{eq-good} by \eqref{eq-good-1}. For example, one may argue that if $\tilde \mu_{N-1}(\mathcal S_{*}(N-1)) \geq \tilde {C_1}$ holds, then (possibly with a slight change of $N^{\frac 1 5}, \epsilon, \epsilon_1,\epsilon_2 \mbox{ and } \epsilon_3$ in the definition of good paths) $\mu_{k,N}(\mathcal S_{k,*}(N))=\mu_{N,N}(\mathcal S_{N,*}(N)) \geq {\frac 1 {\cosh x}}\tilde {C_1}$ holds, since
	\begin{eqnarray*}
		\mu_{N,N}(\mathcal S_{N,*}(N))&\geq &\mu_{N,N}(\{(A_1, \ldots, A_{L-1},N): U_N=0, (A_1, \ldots, A_{L-1})\in S_*(N-1)\})\\
		&=& {\frac 1 {\cosh x}} \tilde \mu_{N-1}(\mathcal S_{*}(N-1))\,.
	\end{eqnarray*}
	
	In the rest of the proof, $\P$ and $\E$ refer to $\tilde \mu_{N}$ unless otherwise specified. Note that $\P$ depends on both $x$ and $N$. Under this probability space (or the more general $\tilde \mu_{N,\beta}$), we say an event $\mathcal {E}_N$ happens with probability tending to 1 as $N\to \infty$ (or with high probability for brevity) if $1-\P(\mathcal {E}_N) \leq p(\epsilon, N)$ where $p(\epsilon, N)>0$ only depends on $\epsilon$ and $N$, and (when $\epsilon$ is fixed) goes to 0 as $N\to \infty$. Similarly, we say a quantity (possibly random) $Q_N$ is $o(1)$ if $|Q_N|\leq q_N$ where $q_N>0$ is fixed, only depends on $N$ and goes to 0 as $N\to\infty$.
	
	By a simple calculation, for $U\sim F_1$, we have $\E U=x\coth x$, and $\var U$ is bounded by an absolute constant (since $|x - x_0| \leq \epsilon^2$). Therefore it is immediate from, say, Chebyshev's inequality (as used in proving the weak law of large numbers) that with probability tending to 1 as $N\to \infty$ we have $L\in[\alpha(1-\epsilon)N,\alpha(1+\epsilon)N]$ (recall that $\alpha =x_0\coth x_0$). It now remains to consider the requirements on Hamming distances in the definition of good paths, for which purpose we split into three cases as follows.
	
	\bigskip
	\noindent {\bf Case 1:  $H(v_i,v_j)=|i-j|, \text{ if } |i-j|=1,2,3.$}
	
	We show that this requirement can be satisfied by a sequence generated from $\tilde \mu_{N}$ with probability bounded from below by a constant. We prove the following statement \eqref{eq-Case-1} for general $\beta$. 
	
	Fix a $\beta\in (0,1]$. For $i\in \{1, \ldots, \beta N\}$, let $U_i$ be i.i.d.\ random variables distributed as $F_1$, and independently for $i\in \{\beta N + 1, \ldots, N\}$, let $U_i$ be i.i.d.\ random variables distributed as $F_2$. Given the values of $U_1, \ldots, U_N$, we let $(A_1, \ldots, A_L)$ (where $L = \sum_{i=1}^N U_i$) be a sequence uniformly at random subject to $|\{1\leq j \leq L: A_j = i\}| = U_i$. Let $\tilde \mu_{N,\beta}$ be the probability measure of the random sequence $(A_1, \ldots, A_L)$ thus obtained.
	
	For convenience we set $A_{i+L}=A_i$ for $i\geq 1$. Let
	\begin{equation}\label{def-I_i-N_i}
	\begin{aligned}
	& I_i=1_{\{A_i = A_{i+1}\}} \mbox { and } \mathcal N_i=\{i,i+1\}, &\mbox{ if $i = 1, 2, \cdots, L$};\\
	& I_i=1_{\{A_{i-L} = A_{i+2-L}\}} \mbox { and } \mathcal N_i=\{i-L,i+2-L\}, &\mbox{ if $i = L+1, L+2, \cdots, 2L$}.
	\end{aligned}
	\end{equation}
	
	Let $x_0$ be given as in Theorem \ref{thm-main}, and let $\gamma = \beta x_0\coth x_0+(1-\beta)x_0\tanh x_0$. For any $\epsilon>0$, there exists a constant $c^*>0 $ and an integer $N'>0$ which both depend only on $\epsilon$, such that for all $|x - x_0| \leq \epsilon^2$ and $N>N'$ we have
	\begin{equation}\label{eq-Case-1}
	\tilde \mu_{N,\beta}(\sum_{i=1}^{2L}I_i=0)\geq c^*\,.
	\end{equation}
	
	\begin{remark}
		In fact, as can be seen from our proof, $x_0$ could be any fixed positive number (not necessarily given by Theorem \ref{thm-main}). Moreover, we have $c^*\to e^{-\tfrac{2x_0^2}{\gamma}}$ as $\epsilon\to0$, and if $x\to x_0$ as $N\to\infty$, then $\sum_{i=1}^{2L}I_i$ converges to the Poisson distribution with mean $\tfrac{2x_0^2}{\gamma}$ as $N\to\infty$. However, we don't need any of these facts.
	\end{remark}
	
\paragraph{Proof of \eqref{eq-Case-1}}
		In this proof, $\P$ and $\E$ refer to $\tilde \mu_{N,\beta}$. Let
		$$D_j:=|\{1\leq i\leq N: U_i = j\}|$$
		for $j \in \mathbb N$ and
		$$\Lambda:= L^{-1} \sum_{j=2}^\infty D_j j (j-1).$$
		
		By a simple calculation, for $U\sim F_1$, we have $\E U=x\coth x$ and $\E U(U-1)=x^2$, and the variances of $U$ and $U(U-1)$ are both bounded by an absolute constant, as long as $x$ stays in a fixed neighborhood of $x_0$. Similarly, for $U\sim F_2$, we have $\E U=x\tanh x$ and $\E U(U-1)=x^2$, and the variances of $U$ and $U(U-1)$ are both bounded by an absolute constant. By Chebyshev's inequality, we have with probability tending to 1 as $N\to \infty$,
		\begin{equation}\label{L}
		L = \sum_{i=1}^N U_i \in [\gamma (1-\epsilon) N,\gamma (1+\epsilon) N]
		\end{equation}
		and
		\begin{equation}\label{Dj}
		\sum_{j=2}^\infty D_j j (j-1) = \sum_{i=1}^N U_i(U_i-1) \in [x_0^2 (1-\epsilon) N, x_0^2 (1+\epsilon)N]\,.
		\end{equation}
		\eqref{L} and \eqref{Dj} combined give
		\begin{equation*}
		\Lambda\in [(1-3\epsilon) \tfrac{x_0^2} {\gamma}, (1+3\epsilon) \tfrac{x_0^2} {\gamma}]\,.
		\end{equation*}
		By the uniform convergence of $\sum_{k=1}^K(-1)^{k+1}\frac {(2 \Lambda)^k}{k!}$ to $1-e^{-2\Lambda}$ on $[(1-3\epsilon) \tfrac{x_0^2} {\gamma}, (1+3\epsilon) \tfrac{x_0^2} {\gamma}]$, there exists a finite odd number $K$ and $0<c^{**}<1$ ($c^{**}$ may depend on $K$ and $\epsilon$) such that for all $\Lambda \in [(1-3\epsilon) \tfrac{x_0^2} {\gamma}, (1+3\epsilon) \tfrac{x_0^2} {\gamma}]$, we have
		\begin{equation}\label{eq-def-K}
		\sum_{k=1}^K(-1)^{k+1}\frac {(2 \Lambda)^k}{k!}<c^{**}\,.
		\end{equation}
		Again, by Chebyshev's inequality, we have with probability tending to 1 as $N\to \infty$,
		\begin{equation}\label{2Kmoment}
		\sum_{j=0}^\infty D_j j^{2k} = \sum_{i=1}^N U_i^{2k}  \leq C_K N \,, \mbox{ for all } 1\leq k\leq K
		\end{equation}
		where $C_K>0$ is a constant which only depends on $K$. Also, by a rather loose bound on $\P(U_i\geq 10 \log N)$ (directly from the definition of $U_i$), we have with probability tending to 1 as $N\to \infty$,
		\begin{equation}\label{10logN}
		\max_{1\leq i \leq N} U_i \leq 10 \log N\,.
		\end{equation}
		We will assume \eqref{L}, \eqref{Dj}, \eqref{2Kmoment} and \eqref{10logN} without mention in what follows.

		Write $\mathcal{F} = \sigma(U_1, U_2, \ldots, U_N)$. By Bonferroni's inequalities \cite{Bon36}, we have
		\begin{equation}\label{label-Bonferroni}
		\P\big(\sum_{i=1}^{2L} I_i\geq 1\mid \mathcal{F}\big)\leq \sum_{k=1}^K (-1)^{k+1}\sum_{1 \leq i_1 < i_2 < \cdots < i_k \leq 2L}\P(I_{i_1}=1,I_{i_2}=1,\cdots,I_{i_k}=1\mid \mathcal{F})\,.
		\end{equation}
		In order to prove \eqref{eq-Case-1}, it suffices to show that each summand (of $\sum_{k=1}^K$) on the right hand side of \eqref{label-Bonferroni} is asymptotic to the corresponding summand on the left hand side of \eqref{eq-def-K}. That is to say, we want to show that for each $1\leq k\leq K$,
		\begin{equation} \label{eq-each-k}
		\sum_{1 \leq i_1 < i_2 < \cdots < i_k \leq 2L}\P(I_{i_1}=1,I_{i_2}=1,\cdots,I_{i_k}=1\mid \mathcal{F})-\frac {(2\Lambda)^k}{k!}=o(1)\,.
		\end{equation}
		For this purpose, we will split $\sum\limits_{1 \leq i_1 < i_2 < \cdots < i_k \leq 2L}\P(I_{i_1}=1,I_{i_2}=1,\cdots,I_{i_k}=1\mid \mathcal{F})$ into two parts according to whether or not any $A_i$ is involved in the definition of more than one $I_{i_j}$'s ($1 \leq j \leq k$). More precisely, for a pair of integers $(i_j, i_{j'})$ (or equivalently $(I_{i_j}, I_{i_{j'}})$) where $i_j\neq i_{j'}$   we say it is \emph{intersecting} if $\mathcal N_{i_j}\cap \mathcal N_{i_j'}\neq \emptyset$ (see \eqref{def-I_i-N_i} for the definition of $\mathcal N_{i}$). Let $\mathcal I^{k,1}$ ($\mathcal I^{k,2}$) denote the set of all sequences $(i_1, i_2, \cdots, i_k)$ such that $1 \leq i_1 < i_2 < \cdots < i_k \leq 2L$ and it contains no (at least 1) intersecting pair, respectively. We can write
		$$\sum_{1 \leq i_1 < i_2 < \cdots < i_k \leq 2L}\P(I_{i_1}=1,I_{i_2}=1,\cdots,I_{i_k}=1\mid \mathcal{F})  = \mathcal J_1 + \mathcal J_2$$
		where
		\begin{eqnarray*}\label{eq-J-1-2}
			\mathcal J_1 = \sum_{\mathcal I^{k,1}}\P(I_{i_1}=1,I_{i_2}=1,\cdots,I_{i_k}=1\mid \mathcal{F}) \mbox{ and } \mathcal J_2 = \sum_{\mathcal I^{k,2}}\P(I_{i_1}=1,I_{i_2}=1,\cdots,I_{i_k}=1\mid \mathcal{F}).
		\end{eqnarray*}
		
		We first bound the term $\mathcal J_1$.  For any $(i_1, i_2, \cdots, i_k)\in \mathcal I^{k,1}$, the neighborhoods $\mathcal N_{i_1}, \mathcal N_{i_2},\cdots,\mathcal N_{i_k}$ are disjoint by definition. Now given $\mathcal{F}$, for each $r=1,\ldots,k$, there are at most $\sum_{j=2}^\infty D_j\cdot j\cdot(j-1)$ ways of choosing two matching updates for the two slots in $\mathcal N_{i_r}$, and there are at most $(L - 2k)!$ ways of arranging the remaining $(L-2k)$ updates, therefore we have
		\begin{eqnarray}\label{J_1-upper-bound}
		\P(I_{i_1} = 1, I_{i_2} = 1,\cdots, I_{i_k} = 1 \mid \mathcal{F}) &\leq & \frac{(L - 2k)!}{L!}\Big(\sum_{j=2}^\infty D_j\cdot j\cdot(j-1)\Big)^k\\
		&=& (\frac 1 L)^k(1+o(1))\Lambda^k\nonumber.
		\end{eqnarray}
		Combined with the simple fact that $|\mathcal I^{k, 1}| \leq (2L)^k/k!$, this gives that
		$\mathcal J_1 \leq (2 \Lambda)^k(1+o(1))/k!$.
		On the other hand, by a similar reasoning
		\begin{eqnarray*}
			\P(I_{i_1}=1,I_{i_2}=1,\cdots,I_{i_k}=1 \mid \mathcal{F}) &\geq & \frac{(L - 2k)!}{L!}\prod_{1 \leq r \leq k}\big( \sum_{j=2}^{10\log N} (D_j - (r-1))\cdot j\cdot(j-1) \big)\\
			& \geq & (\frac 1 L)^k(1+o(1))(\Lambda+o(1))^k.
		\end{eqnarray*}
		Moreover, we have $|\mathcal I^{k,1}| \geq (1+o(1)) (2L)^k / k!$ since $|\mathcal I^{k,1}| \geq  \prod\limits_{1 \leq r \leq k}(2L - 7(r-1)) / k!$ (each $\mathcal N_{i}$ intersects 6 other $\mathcal N_{i}$'s). Hence, we obtain that
		$\mathcal J_1 \geq (2 \Lambda)^k(1+o(1))/k!$.
		Altogether, we get
		\begin{equation}\label{eq-I-k-1}
		\mathcal J_1=  (2 \Lambda)^k(1+o(1))/k!\,.
		\end{equation}
		
		It remains to control $\mathcal J_2$. For any $(i_1, i_2, \cdots, i_k)\in \mathcal I^{k,2}$, denote by $\mathcal E_{i_1, \ldots, i_k} = \{(A_1, A_2,\ldots, A_L):I_{i_1}=1,I_{i_2}=1,\cdots, I_{i_k}=1\}$. Observe that $I_{i_1}=1,I_{i_2}=1,\cdots, I_{i_k}=1$ (the criteria for $\mathcal E_{i_1, \ldots, i_k}$)  can be rewritten (or simplified) uniquely as a set of equalities
		\begin{align*}
		& A_{j_1}=A_{j_1+n_{1,1}}=A_{j_1+n_{1,1}+n_{1,2}}=\cdots=A_{j_1+n_{1,1}+n_{1,2}+\cdots+n_{1,a_1-1}}\\
		& A_{j_2}=A_{j_2+n_{2,1}}=A_{j_2+n_{2,1}+n_{2,2}}=\cdots=A_{j_2+n_{2,1}+n_{2,2}+\cdots+n_{2,a_2-1}}\\
		&\cdots\\
		& A_{j_\ell}=A_{j_\ell + n_{\ell,1}} = A_{j_\ell + n_{\ell,1}+n_{\ell,2}}=\cdots=A_{j_\ell + n_{\ell,1} + n_{\ell,2} + \cdots + n_{\ell, a_\ell-1}}
		\end{align*}
		where $n_{1,1},\ldots,n_{1,a_1-1}, n_{2,1}, \ldots, n_{2,a_2-1}, \ldots, n_{\ell,1}, \ldots, n_{\ell,a_\ell-1}$ are either 1 or 2, $a_1,a_2,\ldots,a_\ell$ are integers $\geq 2$ and $a_1 + a_2 + \cdots + a_\ell \leq 2k$ (in particular each $a_i$ is $\leq 2k$). Also, since $(i_1, i_2, \cdots, i_k)\in \mathcal I^{k,2}$, i.e. there is at least one intersecting pair in $I_{i_1},\cdots, I_{i_k}$, at least one of the $a_1,a_2,\ldots, a_\ell$ must be strictly larger than 2, so that $a_1 + a_2 + \cdots + a_\ell > 2\ell$. Denote by $\mathcal A$ the preceding set of equalities (so $\mathcal A$ can also be viewed as an event). By a rather loose bound, $|\{(i_1, \ldots, i_k): \mathcal E_{i_1, \ldots, i_k} = \mathcal A\}| \leq (a_1 + a_2 + \cdots + a_\ell)^{2k} \leq (2k)^{2k}$. Therefore we have
		\begin{eqnarray}\label{eq-E-A}
		\sum_{\mathcal I^{k,2}}\P(\mathcal E_{i_1, \ldots, i_k}\mid \mathcal{F}) &\leq & (2k)^{2k}\sum_{\ell}\sum_{\mathcal{D}_1}\sum_{\mathcal{D}_2}\sum_{\mathcal{D}_3}\P( \mathcal A\mid \mathcal{F}),
		\end{eqnarray}
		where $\mathcal{D}_1, \mathcal{D}_2, \mathcal{D}_3$ respectively denote the collections of all valid choices of $(a_1, a_2, \ldots, a_{\ell})$, \\$(n_{1,1},\ldots,n_{1,a_1-1}, n_{2,1}, \ldots, n_{2,a_2-1}, \ldots, n_{\ell,1}, \ldots, n_{\ell,a_\ell-1})$ and $(j_1,j_2,\ldots,j_\ell)$.
		Now similar to \eqref{J_1-upper-bound}, we have
		\begin{align*}
		\P(\mathcal A\mid \mathcal{F}) \leq \frac{(L-(a_1+a_2+\cdots+a_\ell))!}{L!}\prod_{r = 1}^\ell\Big(\sum_{i=a_r}^\infty D_i\cdot i\cdot (i-1)\cdots(i - a_r + 1)\Big)\,.
		\end{align*}
		Therefore, by \eqref{2Kmoment} we have
		\begin{align}\label{eq-prob-A}
		\sum_{\mathcal{D}_3}\P(\mathcal A\mid \mathcal{F})  \leq C'_K N^{2\ell-(a_1+a_2+\cdots+a_\ell)} \leq C'_K/N\,,
		\end{align}
		where $C'_K$ is another constant depending on $K$, and the second inequality follows from the fact that $a_1 + a_2 + \cdots + a_\ell > 2\ell$. Since $|\mathcal D_1|$, $|\mathcal{D}_2|$ and $\ell$ are all bounded by a number that depends only on $K$, we combine \eqref{eq-E-A} and \eqref{eq-prob-A} and obtain
		$$\sum_{\mathcal I^{k,2}}\P(\mathcal E_{i_1, \ldots, i_k}\mid \mathcal{F}) \leq C^*_K/N\,,$$
		where $C^*_K>0$ depends only on $K$. Combined with \eqref{eq-I-k-1}, this yields \eqref{eq-each-k} and therefore \eqref{eq-Case-1}.\qed

	\bigskip
	\noindent {\bf Case 2 : $H(v_i,v_j)=|i-j| \text{ or } |i-j|-2,  \text{ if } 4\leq|i-j|\leq N^{\frac 1 5}.$}
	
	We show that this requirement is satisfied by a sequence generated from $\tilde \mu_{N}$ with probability tending to 1 as $N\to \infty$. Denote by $W_k$ the event that in some $k$ consecutive updates there are at least two coordinates such that all of them occur at least twice. It suffices to show that $W_{N^{1/5}}$ happens with probability tending to 0 as $N\to\infty$. Given $\mathcal{F} = \sigma(U_1, U_2, \ldots, U_N)$, the conditional probability that the coordinates 1 and 2 both occur at least twice in the first $k$ updates is less than ${U_1\choose 2}({\frac k L})^2{U_2\choose 2}({\frac k L})^2$, by a union bound. Therefore,
	\begin{equation}\label{eq-Case-2}
	\mathbb{P}(W_k)  = \E (\mathbb P (W_k \mid \mathcal F)) \leq \sum_{1\leq i<j\leq N} \E \Big({U_i\choose 2}({\frac k L})^2{U_j\choose 2}({\frac k L})^2 L\Big) \leq \frac {C' k^4} N = o(1)
	\end{equation}
	for $k=N^{1/5}$ (here $C'$ is an absolute constant).
	
	\bigskip
	\noindent{\bf Case 3: 	\\
		$H(v_i,v_j)\leq (1/2+\epsilon_1)N, \text{ if } N^{\frac 1 5}\leq |i-j|\leq\alpha(1/2+\epsilon)N;\\
		H(v_i,v_j)>(1/2+\epsilon_1)N, \text{ if } |i-j| > \alpha(1/2+\epsilon_2)N;\\
		H(v_i,v_j)\geq {\frac {|i-j|}{\alpha+\epsilon_3}}, \text{ if } N^{\frac 1 5}\leq|i-j|\leq \alpha(1/2+\epsilon_2)N.$}
	
	We show that these three requirements are satisfied by a sequence generated from $\tilde \mu_{N}$ with probability tending to 1 as $N\to \infty$.
	Let $\mathcal R$ be the collection of all sequences satisfying these three requirements.
	
	Before we proceed, let us first give a hint on why this may be true (i.e. what these three requirements are trying to say). For $t\in[0,1]$, we define
	\begin{equation}\label{def-g(t)}
	g(t):= \frac {\sinh(x_0t)\cosh(x_0(1-t))}{\sinh x_0}=\sinh(x_0t)\cosh(x_0(1-t))\,.
	\end{equation}
	Vaguely (and roughly) speaking, $g(t)N$ is the ``expected Hamming distance traveled by a path in time $t$'' (if the whole path uses a unit time). We will make this precise below. For a derivation of the formula \eqref{def-g(t)}, see equation \eqref{eq-prob-odd}. 
	By plotting $g(t)$ (or an easy calculus), one can easily see that
	\begin{itemize}
		\item $g(t) \leq \frac 1 2, \mbox{ if } 0 \leq t\leq{\frac 1 2}$
		\item $g(t) \geq \frac 1 2, \mbox{ if } {\frac 1 2} \leq t\leq 1$
		\item $g(t)\geq t, \mbox{ if } 0 \leq t\leq{\frac 1 2}$
	\end{itemize}
	which correspond to the three requirements, respectively. We now carry out the idea above fully and rigorously as follows.
	
	We will consider the following continuous version of $\tilde \mu_{N}$, namely $\hat \mu_N$: 
	As in $\tilde \mu_{N}$, we first let $U_i, 1\leq i\leq N$ be i.i.d.\ random variables distributed as $F_1$. Now given the values of $U_1, \ldots, U_N$, we denote $\mathcal L= \{(i,j): 1\leq i\leq N, 1\leq j\leq U_i\}$ and $L=|\mathcal L|=\sum_{i=1}^N U_i$, and let $\{r_{i,j}: (i,j)\in \mathcal L\}$ be $L$ i.i.d.\ uniform $[0,1]$ random variables. Let $\hat \mu_N$ be the underlying probability measure $F_1^N\times U[0,1]^\infty$.
	
	For each $1\leq i\leq N$, we attach the label ``$i$'' to each real number $r_{i,j}, (i,j)\in \mathcal L$. Since almost surely under $\hat \mu_N$, $L$ is finite and $r_{i,j}$'s are distinct, we can (without ambiguity) let $r_1<r_2<\cdots<r_L$ be the reordering of the reals $r_{i,j}, (i,j)\in \mathcal L$ in increasing order, and for $1\leq \ell \leq L$ let $\hat A_\ell$ be the unique label of $r_\ell$. We have thus formed a random integer sequence $(\hat A_1, \ldots, \hat A_L)$ under $\hat \mu_N$.
	
	It is clear that $(\hat A_1, \ldots, \hat A_L)$ under $\hat \mu_N$ has the same distribution as $(A_1, \ldots, A_L)$ under $\tilde \mu_{N}$, i.e., for any integer sequence $(a_1,\ldots, a_L)$, we have
	$$\hat \mu_N((\hat A_1, \ldots, \hat A_L)=(a_1,\ldots, a_L))=\tilde \mu_{N}((A_1, \ldots, A_L)=(a_1,\ldots, a_L))\,.$$
	Therefore
	\begin{equation} \label {eq-continuous-model-1}
	\hat \mu_N((\hat A_1, \ldots, \hat A_L)\in \mathcal R)= \tilde \mu_{N}((A_1, \ldots, A_L)\in \mathcal R)\,.
	\end{equation}
	
	For any interval $I\subseteq [0,1]$ and any $1\leq i\leq N$, we let $N_{I,i}$ be the number of labels ``$i$'' in $I$, i.e., $N_{I,i}=|\{1\leq j\leq U_i:r_{i,j}\in I\}|$. Let
	$$T_I=\sum_{i=1}^N N_{I,i}=|\{(i,j)\in \mathcal L:r_{i,j}\in I\}|$$
	be the total number of labels in $I$ and
	$$O_I=\sum\limits_{i=1}^N 1_{\{N_{I,i} \mbox { is an odd number}\}}$$
	count all the $i$'s ($1\leq i\leq N$) that appear an odd number of times as a label in $I$. Let $\hat {\mathcal {R}}$ be the following event: for all intervals $I\subseteq [0,1]$, we have
	\begin{align*}
	& O_I\leq (1/2+\epsilon_1)N, \text{ if } N^{\frac 1 5}\leq T_I\leq\alpha(1/2+\epsilon)N;\\
	& O_I>(1/2+\epsilon_1)N, \text{ if } T_I > \alpha(1/2+\epsilon_2)N;\\
	& O_I\geq {\frac {T_I}{\alpha+\epsilon_3}}, \text{ if } N^{\frac 1 5}\leq T_I \leq \alpha(1/2+\epsilon_2)N.
	\end{align*}
	We see that
	\begin{equation} \label {eq-continuous-model-2}
	\hat \mu_N(\hat {\mathcal {R}})= \hat \mu_N((\hat A_1, \ldots, \hat A_L)\in \mathcal R)\,.
	\end{equation}
	
	In light of equalities \eqref{eq-continuous-model-1} and \eqref{eq-continuous-model-2}, it suffices to show that under $\hat \mu_N$, $\hat {\mathcal {R}}$ happens with probability tending to 1 as $N\to \infty$. In the following $\P$ and $\E$ refer to $\hat \mu_N$. To this end, our strategy is to first show that with high probability, for all intervals $I\subseteq [0,1]$ such that $|I|\geq N^{-5 / 6}$, both $T_I$ and $O_I$ are concentrated around their means respectively.
	
	For any interval $I\subseteq [0,1]$ of length $t$, conditioning on $T_{[0,1]}=L$, $T_I$ is the sum of $L$ i.i.d.\ Bernoulli random variables with mean $t$, thus by Chernoff's bound \cite{Ch52},
	\begin{equation}
	\label{conc1}
	\P(|T_I-Lt|\geq \epsilon Lt|L)\leq 2\exp(-\epsilon^2 Lt/3).
	\end{equation}
	
	For $O_I$, by definition $O_I=\sum\limits_{i=1}^N 1_{\{N_{I,i} \mbox { is an odd number}\}}$ where $1_{\{N_{I,i} \mbox { is an odd number}\}}$ for $1\leq i\leq N$ are $N$ i.i.d.\ Bernoulli random variables with mean $p_I=\P(N_{I,1} \mbox { is an odd number})$. We can compute $p_I$ as follows:
	\begin{eqnarray}
	p_I & = & \P(N_{I,1} \mbox { is an odd number}) \nonumber\\
	& = & \sum_{i=0}^{\infty}{\frac {x^{2i+1}}{(2i+1)!\sinh x}} \sum_{j=0}^i{{2i+1}\choose {2j+1}}t^{2j+1}(1-t)^{2i-2j} \nonumber\\
	& = & \frac 1 {\sinh x}\Big(\sum_{j=0}^\infty \frac {(xt)^{2j+1}}{(2j+1)!}\Big)\Big(\sum_{i-j=0}^\infty\frac {(x(1-t))^{2(i-j)}}{(2i-2j)!}\Big)\nonumber \\
	& = & \frac {\sinh (xt)\cosh(x(1-t))} {\sinh x}.\label{eq-prob-odd}
	\end{eqnarray}
	By Chernoff's bound again, we have
	\begin{equation}
	\label{conc2}
	\P(|O_I-\E O_I|\geq 3\epsilon \E O_I)\leq 2\exp\Big(-3\epsilon^2 N{\frac {\sinh (xt)\cosh(x(1-t))}{\sinh x}}\Big).
	\end{equation}
	
	Now let us divide $[0,1]$ into $N$ non-overlapping intervals of equal length $1/N$. We say an interval is \emph{integral} if it is of the form $[n_1 / N, n_2 / N]$, where $n_1, n_2\in \mathbb N, 0 \leq n_1 < n_2 \leq N$ and $n_2 - n_1 \geq N^{1/6}$ (so that its length is at least $N^{-5 / 6}$). Denote by $E_L$ the event $\{\frac L {(x\coth x) N} \in [1-\epsilon, 1+\epsilon]\}$. Since on $E_L$, $Lt  \geq c N^{1/6}$ when $t\geq N^{-5 / 6}$ for a constant $c>0$, we can apply \eqref{conc1} and a union bound over all integral intervals to obtain that
	$$\P\Big(\max_{I \text{ is integral }} \mid T_I - Lt_I \mid \geq \epsilon Lt_I \mid L\Big)\leq 2(N+1)^2\exp(-\epsilon^2 c N^{1/6}/3), \mbox { on } E_L.$$
	Since $\E T_I=\E Lt_I=(x\coth x) N t_I$ and therefore $Lt_I\in [(1-\epsilon)\E T_I,(1+\epsilon)\E T_I]$ on $E_L$, we have
	$$\P\Big(\max_{I \text{ is integral }} \mid T_I - \E T_I \mid \geq 3\epsilon \E (T_I) \mid L\Big) \leq 2(N+1)^2\exp(-\epsilon^2 c N^{1/6}/3), \mbox { on } E_L.$$
	Since $E_L$ happens with probability tending to 1 as $N\to \infty$, we thus have that $\mathcal E_T$ happens with probability tending to 1 as $N\to \infty$, where
	$$ \mathcal E_T = \bigcap\limits_{I \mbox{ is integral }} \{T_I \in [(1-3\epsilon)\E T_I, (1+3\epsilon) \E T_I]\}\,.$$ 
	
	From \eqref{conc2}, since $\sinh x \geq x$ for $x \geq 0$, we have $N p_I \geq c N^{1/6}$ when $t \geq N^{-5/6}$ for a constant $c>0$, we can simply do a union bound over all integral $I$ and deduce that $\mathcal E_O$ happens with probability tending to 1 as $N\to \infty$, where
	$$\mathcal E_O =  \bigcap\limits_{I \mbox{ is integral }} \{O_I \in [(1-3\epsilon)\E O_I, (1+3\epsilon)\E O_I]\}\,.$$
	So we may assume without loss that both $\mathcal E_T$ and $\mathcal E_O$ occur, i.e., both $T_I$ and $O_I$ are within $[1-3\epsilon, 1+3\epsilon]$ times their respective means for any integral interval $I$.
	
	We will now argue that with high probability, both $T_I$ and $O_I$ are within $[1-4\epsilon, 1+4\epsilon]$ times their respective means for {\emph {any}} interval $I$ such that $|I|\geq N^{-5 / 6}$. For convenience we call any interval $[i/N, (i+1)/N]$ (where $0\leq i \leq N-1$) a small interval. For any small interval, the probability that there are at least $100 \log N$ labels in it is bounded by $\E\binom{L}{100 \log N}/ N^{100 \log N}$, which is at most $1/N^2$ for all large $N$. Therefore by applying a union bound over all $N$ small intervals, we have that the probability that some small interval contains at least $100\log N$ labels is $o(1)$. Without loss of generality we assume this event does not occur (i.e., any small interval contains less than $100\log N$ labels) in what follows. Now we can approximate any interval $I$ of length $t\geq N^{-5/6}$ by an integral interval $I'$ with an error of at most two small intervals, so that $|T_I-T_{I'}|, |O_I-O_{I'}|\leq 200\log N$. Also, from $\E T_I=(x\coth x) N t$ and $\E O_I= Np_I=N\frac {\sinh (xt)\cosh(x(1-t))} {\sinh x}$ we see that $\E T_{I'}, \E O_{I'}\geq cN^{1 / 6}$ for a constant $c>0$ and $\frac {\E T_I}{\E T_{I'}},\frac {\E O_I}{\E O_{I'}} =1+o(1)$. Therefore, $T_I' \in [(1-3\epsilon)\E T_I', (1+3\epsilon)\E T_I']$ and $O_I' \in [(1-3\epsilon) \E O_I', (1+3\epsilon) \E O_I']$ will imply (respectively) $T_I \in [(1-4\epsilon) \E T_I, (1+4\epsilon)\E T_I]$ and $O_I \in [(1-4\epsilon)\E O_I, (1+4\epsilon)\E O_I]$, as desired.
	
	Now if $|I| > (1/2+6\epsilon)$, by the concentration of $T_I$ discussed above, we have
	$$T_I \geq (1 - 4\epsilon)\E T_I = (1 - 4\epsilon)(x \coth x) N |I|  > \alpha(1/2 + \epsilon)N$$
	for all  sufficiently small but fixed $\epsilon$. And if $|I|<N^{-5 / 6}$, then
	$$T_I \leq T_{I^*}\leq (1+4\epsilon) \E T_{I^*}<N^{\frac 1 5}$$
	where $I^*\supseteq I$ is an interval of length $N^{-5 / 6}$.
	Therefore, we have $N^{\frac 1 5}\leq T_I\leq\alpha(1/2+\epsilon)N$ implies $|I|\in [N^{-5/6},(1/2 + 6\epsilon)]$. However, if $|I|\in [N^{-5/6},(1/2 + 6\epsilon)]$, then by the concentration of $O_I$, we have $O_I\leq (1+4\epsilon) \E O_I= (1+4\epsilon)Np_I\leq(1/2+\epsilon_1)N$ for $\epsilon_1 = \epsilon^{1/2}$. Therefore, we see that
	\begin{equation}\label{eq-Case-3-1}
	O_I\leq (1/2+\epsilon_1)N, \mbox{ if } N^{\frac 1 5}\leq T_I\leq\alpha(1/2+\epsilon)N\,.
	\end{equation}
	A similar argument shows that for $\epsilon_2 = \epsilon^{1/4}$, $T_I>\alpha(1/2+\epsilon_2)N$ implies $|I|> (1/2+6\epsilon_1)$, which in turn implies $O_I> (1/2+\epsilon_1)N$. Therefore
	\begin{equation}\label{eq-Case-3-2}
	O_I> (1/2+\epsilon_1)N, \mbox{ if } T_I>\alpha(1/2+\epsilon_2)N\,.
	\end{equation}
	Finally, $N^{\frac 1 5}\leq T_I\leq\alpha(1/2+\epsilon_2)N$ implies $|I|\in [N^{-5/6},(1/2 + 6\epsilon_2)]$. But for $|I| \in [N^{-5/6},(1/2 + 6\epsilon_2)]$ we have
	$p_I={\frac {\sinh (x|I|)\cosh(x(1 - |I|))} {\sinh x}} \geq (x\coth x) |I| {\frac 1{\alpha+\epsilon_3'}}$ for $\epsilon_3' = 0.1\epsilon^{1/8}$,
	i.e.,
	$$\E O_I\geq \frac 1 {\alpha + \epsilon_3'}\E T_I\,.$$ By our assumptions on the concentration of $O_I$ and $T_I$ again, we deduce that $O_I \geq \frac 1{\alpha+\epsilon_3}T_I$ for $\epsilon_3 = \epsilon^{1/8}$. In other words
	\begin{equation}\label{eq-Case-3-3}
	O_I\geq {\frac {T_I}{\alpha+\epsilon_3}}, \mbox{ if } N^{\frac 1 5}\leq T_I \leq \alpha(1/2+\epsilon_2)N\,.
	\end{equation}
	By \eqref{eq-Case-3-1}, \eqref{eq-Case-3-2} and \eqref{eq-Case-3-3} we have completed the task of Case 3.
	
	\bigskip
	
	Combining the above three cases, we have completed the proof of \eqref{eq-good-1}, and thus the proof of the lemma.\qed
\end{proof}

Let $\mathcal{P}$ be the collection of good paths. For any path $P\in\mathcal{P}$, let $A_P$ be the event that $P$ is accessible. So we have $Z_{N,x,*}=\sum_{P\in\mathcal{P}} 1_{A_P}$. Notice that
\begin{eqnarray}\label{eq-second-moment}
\E Z_{N,x,*}^2 &=&\sum_{P\in \mathcal{P}}\sum_{P'\in \mathcal{P}}\P(A_{P}\cap A_{P'})\nonumber\\
&=&\sum_{P\in \mathcal{P}}\P(A_{P})\sum_{P'\in \mathcal{P}}\P(A_{P'} \mid A_{P})\nonumber\\
&=&\sum_{P\in \mathcal{P}}\P(A_{P})\E(Z_{N,x,*} \mid A_{P})\,.
\end{eqnarray}
So in order to estimate  $\E Z^2_{N,x,*}$, a key step is to estimate $\E(Z_{N,x,*} \mid A_{P})$. For any good path $P$ of length $L$, let $v_0=\vec{0}$, $v_1$, $v_2$, $\ldots$ , $v_L=\vec{1}$ be the $(L+1)$ vertices it passes through. Let $X_i$ be the (random) value at $v_i$ (recall that $X_0=0$ and $X_L=x$). We denote the successive differences of $X_i$'s by $\delta_1=X_1$, $\delta_2=X_2-X_1$, $\cdots$, $\delta_{L}=x-X_{L-1}$. It is clear that conditioning on $P$ to be accessible, the $X_i$'s are distributed as the order statistics of $(L-1)$ i.i.d.\ uniform $[0,x]$ random variables, so that the conditional distribution of $(\delta_1/x,\delta_2/x,\cdots, \delta_{L}/x)$ given $A_P$ is the Dirichlet distribution $\text {Dir}(1,1,\cdots,1)$. Recall that a Dirichlet distribution $\text {Dir}(\alpha_1,\alpha_2, \cdots, \alpha_K)$ is supported on $(x_1,x_2,\cdots,x_K)$ where $x_i\in[0,1]$ for all $i=1,\ldots, K$ and $\sum_{i=1}^K x_i=1$, and has a density $\frac {\Gamma(\sum_{i=1}^K \alpha_i)}{\prod_{i=1}^K \Gamma(\alpha_i)}\prod_{i=1}^K x_i^{\alpha_i-1}$.

We first state some properties of $(\delta_1,\delta_2, \cdots , \delta_{L})$ conditioning on $A_P$ (they are also known as \emph {the spacings} of the order statistics).
\begin{proposition}\label{prop-spacings-moment}
	For $0=i_0 < i_1 < i_2 < \cdots < i_k < i_{k+1}=L$ and nonnegative integers $\beta_1, \beta_2, \cdots, \beta_{k+1}$,
	\begin{enumerate}[(i)]
		\item Conditional on the event $A_P$, the distribution of 
		$$\frac{1}{x}(X_{i_1}-X_0, X_{i_2}-X_{i_1}, \cdots , X_L-X_{i_k}) = \frac{1}{x}\big(\sum\limits_{i=1}^{i_1}\delta_i, \sum\limits_{i=i_1+1}^{i_2}\delta_i, \cdots , \sum\limits_{i=i_k+1}^{L}\delta_i\big)$$
		is the Dirichlet distribution $\mathrm{Dir}(i_1,i_2-i_1,\cdots,L-i_k)$.
		\item $\E (\prod_{j=1}^{k+1} (X_{i_j}-X_{i_j-1})^{\beta_j} \mid A_P) \leq \prod_{j=1}^{k+1}  \E ((X_{i_j}-X_{i_j-1})^{\beta_j} \mid A_P)$.
		\label{prop-spacings-moment-2}
		\item $\E ((X_{i_1}-X_0)^{\beta_1} \mid A_P) \leq C\sqrt{1+t}(x{\frac {i_1-1}{L-1}}{\frac{(1+t)^{1+1/t}}e})^{\beta_1}$ for $ \beta_1\leq t (i_1 - 1)$, where $C>0$ is an absolute constant.\label{prop-spacings-moment-3}
	\end{enumerate}
\end{proposition}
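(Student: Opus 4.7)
The plan is to handle the three parts in order, reducing everything to either the Dirichlet moment formula or direct Stirling estimates.

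\emph{Part (i)} is the aggregation property of the Dirichlet distribution. The cleanest route is the exponential construction: let $E_1,\ldots,E_L$ be i.i.d.\ standard exponentials with $S=\sum_m E_m$, so that $(E_1/S,\ldots,E_L/S)\sim\mathrm{Dir}(1,\ldots,1)$ and one may take $\delta_m = xE_m/S$. Setting $W_j=\sum_{m=i_{j-1}+1}^{i_j}E_m$ (with the conventions $i_0=0,\ i_{k+1}=L$), the $W_j$'s are independent with $W_j\sim\mathrm{Gamma}(i_j-i_{j-1})$, and the standard fact that normalized independent Gammas form a Dirichlet gives (i).

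\emph{Part (ii)} follows by combining (i) with the Dirichlet moment formula. Writing $\alpha_0=L$ and $B=\sum_j \beta_j$, a direct computation gives
$$\frac{\E\prod_j(X_{i_j}-X_{i_{j-1}})^{\beta_j}}{\prod_j \E(X_{i_j}-X_{i_{j-1}})^{\beta_j}}=\frac{\prod_j(\alpha_0)_{\beta_j}}{(\alpha_0)_B},$$
where $(\alpha_0)_m=\alpha_0(\alpha_0+1)\cdots(\alpha_0+m-1)$ is the rising factorial. It therefore suffices to prove superadditivity of $m\mapsto\log(\alpha_0)_m$, which reads
$$\log(\alpha_0)_{\beta+\beta'}-\log(\alpha_0)_\beta-\log(\alpha_0)_{\beta'}=\sum_{i=0}^{\beta'-1}\bigl[\log(\alpha_0+\beta+i)-\log(\alpha_0+i)\bigr]\ge 0;$$
induction on the number of groups then yields $\prod_j(\alpha_0)_{\beta_j}\le(\alpha_0)_B$, proving (ii).

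\emph{Part (iii).} By (i), $(X_{i_1}-X_0)/x\sim\mathrm{Beta}(i_1,L-i_1)$, so
$$\E(X_{i_1}-X_0)^{\beta_1}=x^{\beta_1}\cdot\frac{(i_1+\beta_1-1)!\,(L-1)!}{(i_1-1)!\,(L+\beta_1-1)!}.$$
The plan is to apply Stirling's formula $n!=\sqrt{2\pi n}(n/e)^n(1+O(1/n))$ to each of the four factorials, keeping the base variables as $i_1-1$ and $L-1$ so that the bound naturally produces $(i_1-1)/(L-1)$ rather than $i_1/L$. Setting $\tilde s=\beta_1/(i_1-1)$ and $\tilde r=\beta_1/(L-1)$ (so that $\tilde r\le\tilde s\le t$) and using the algebraic identity $(1+u)^{i+\beta}=\bigl((1+u)^{1+1/u}\bigr)^{\beta}$ when $u=\beta/i$, the exponential part of Stirling collapses to
$$\Bigl(\tfrac{i_1-1}{L-1}\Bigr)^{\beta_1}\Biggl(\frac{(1+\tilde s)^{1+1/\tilde s}}{(1+\tilde r)^{1+1/\tilde r}}\Biggr)^{\beta_1},$$
while the $\sqrt n$ prefactors combine to $\sqrt{(1+\tilde s)/(1+\tilde r)}\le\sqrt{1+t}$. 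The key analytic input is the calculus fact, obtained by differentiating $(1+1/u)\log(1+u)$ and noting $u-\log(1+u)\ge 0$, that $u\mapsto(1+u)^{1+1/u}$ is strictly increasing on $(0,\infty)$ with limit $e$ as $u\downarrow 0$; hence $(1+\tilde r)^{1+1/\tilde r}\ge e$ and $(1+\tilde s)^{1+1/\tilde s}\le (1+t)^{1+1/t}$, which combine to give the stated bound. The only real obstacle is bookkeeping with the multiplicative $(1+O(1/i_1))(1+O(1/L))$ Stirling error, but since the statement permits a universal constant $C$ together with an extra $\sqrt{1+t}$ slack, this costs nothing.
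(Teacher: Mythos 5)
Your argument is correct and follows essentially the same route as the paper: part (i) by the aggregation property of Dirichlet distributions (you make the paper's one-line appeal explicit via the Gamma representation), part (ii) by the Dirichlet moment formula reduced to a factorial/Gamma inequality (you supply the proof of the inequality the paper merely asserts, via superadditivity of $m\mapsto\log(\alpha_0)_m$), and part (iii) by the Beta moment formula combined with Stirling and the monotonicity of $u\mapsto(1+u)^{1+1/u}$. The only difference is that your write-up fills in the small steps the paper leaves to the reader; there is no substantive divergence.
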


\begin{proof}
	(i) This follows from the aggregation property of the Dirichlet distribution.\\
	(ii) This follows from the moments of Dirichlet-distributed random variables. That is, for $Y\sim \text {Dir}(\alpha_1,\alpha_2,\cdots,\alpha_K)$, we have
	\begin{equation*}\label{dirichlet-moment}
	\E (\prod_{j=1}^K Y_j^{\beta_j}) ={\frac{\Gamma(\sum_{j=1}^K \alpha_j)}{\Gamma(\sum_{j=1}^K \alpha_j+\beta_j)}}\prod_{j=1}^K{\frac {\Gamma(\alpha_j+\beta_j)} {\Gamma(\alpha_j)}}\leq \prod_{i=1}^K{\frac {\Gamma(\sum_{j=1}^K \alpha_j)}{\Gamma(\beta_i+\sum_{j=1}^K \alpha_j)}}\prod_{j=1}^K{\frac {\Gamma(\alpha_j+\beta_j)} {\Gamma(\alpha_j)}}= \prod_{j=1}^K\E(Y_j^{\beta_j})
	\end{equation*}
	where the inequality follows from the convexity of $\log \Gamma(x)$ for $x>0$ and induction.\\
	(iii) As a special case of the moments of Dirichlet-distributed random variables, we have
	\begin{eqnarray}\label{eq-beta-distribution}
	\E ((X_{i_1}-X_0)^{\beta_1} \mid A_P) =x^{\beta_1}{\frac {\Gamma(L)} {\Gamma(L+\beta_1)}}{\frac {\Gamma (i_1+\beta_1)} {\Gamma(i_1)}}
	=x^{\beta_1}{\frac {(L-1)!} {(L+\beta_1-1)!}}{\frac {(i_1+\beta_1-1)!} {(i_1-1)!}}\,.
	\end{eqnarray}
	By Stirling's formula, we have for an absolute constant $C>0$
	\begin{align*}
	\E ((X_{i_1}-X_0)^{\beta_1} \mid A_P) &\leq Cx^{\beta_1}{\frac {\sqrt{(L-1)}({\frac {L-1} e})^{L-1}} {\sqrt{(L+\beta_1-1)}({\frac {L+\beta_1-1} e})^{L+\beta_1-1}}}
	{\frac {\sqrt{(i_1+\beta_1-1)}({\frac {i_1+\beta_1-1} e})^{i_1+\beta_1-1}} {\sqrt{(i_1-1)}({\frac {i_1-1} e})^{i_1-1}}} \\
	& = C\big(x{\frac {i_1-1} {L-1}}\big)^{\beta_1}{\frac {\sqrt{(L-1)(i_1+\beta_1-1)}} {\sqrt{(L+\beta_1-1)(i_1-1)}}}
	\Bigg(\frac {(1+\frac {\beta_1}{i_1-1})^{1+\frac {i_1-1} {\beta_1}}} {(1+\frac {\beta_1}{L-1})^{1+\frac {L-1} {\beta_1}}}\Bigg)^{\beta_1}\,.
	\end{align*}
	Now by our assumption, we have ${\frac {(L-1)(i_1+\beta_1-1)} {(L+\beta_1-1)(i_1-1)}}\leq {\frac {i_1+\beta_1-1} {i_1-1}}\leq 1+t$. In addition, since the function $(1+z)^{1+1/z}$ is increasing in $z$ and tends to $e$ as $z\to0$, we have $\frac {(1+\frac {\beta_1}{i_1-1})^{1+\frac {i_1-1} {\beta_1}}} {(1+\frac {\beta_1}{L-1})^{1+\frac {L-1} {\beta_1}}}\leq{\frac {(1+t)^{1+1/t}}e}$. Substituting these bounds into the preceding display completes the proof.\qed
\end{proof}

In order to compute $\E(Z_{N,x,*}\mid A_{P})$, we first calculate $\E(Z_{N,x,*}(\vec {0}, v_{i_1}, v_{i_2}, \ldots , v_{i_k},\vec {1})\mid A_{P})$, where $\vec {0}$, $v_{i_1}$, $v_{i_2}$, $\ldots$ , $v_{i_k}$ , $\vec {1}$ ($0=i_0 < i_1 < i_2 < \cdots < i_k < i_{k+1}=L$) are vertices on path $P$ and $Z_{N,x,*}(\vec {0}, v_{i_1}, v_{i_2}, \ldots , v_{i_k},\vec {1})$ counts the number of good accessible paths $P'$ that intersect $P$ (vertex wise) at $\vec {0}$, $v_{i_1}$, $v_{i_2}$, $\ldots$ , $v_{i_k}$ , $\vec {1}$. For ease of notation we let $v_{i_0}=\vec {0}$ and $v_{i_{k+1}}=\vec {1}$. Naturally these $(k+2)$ common vertices divide both $P$ and $P'$ into $(k+1)$ segments. The lengths of these segments on $P$ are $i_1$, $(i_2-i_1)$, $\ldots$ , $(L-i_k)$. Suppose that $P'$ visits these $(k+2)$ common vertices at its $j_0=0$-th, $j_1$-th, $\ldots$ , $j_{k+1}$-th steps.
Then on $A_P$ we have
\begin{eqnarray*}
	\P(A_{P'} \mid X_0,X_1,\cdots,X_L) = {\frac {{X_{i_1}}^{j_1-1}}{(j_1-1)!}}{\frac {(X_{i_2}-X_{i_1})^{j_2-j_1-1}} {(j_2-j_1-1)!}}\cdots {\frac {(x-X_{i_k})^{j_{k+1}-j_k-1}} {(j_{k+1}-j_k-1)!}}\,.
\end{eqnarray*}
By Part \eqref{prop-spacings-moment-2} of Proposition \ref{prop-spacings-moment} we have
\begin{eqnarray*}
	\P(A_{P'} \mid A_{P}) 
	&=&\E ( \P(A_{P'} \mid X_0,X_1,\cdots,X_L) \mid A_P)\\
	&=& \E\bigg[{\frac {{Y_{i_1}}^{j_1-1}} {(j_1-1)!}}\cdot {\frac {(Y_{i_2}-Y_{i_1})^{j_2-j_1-1}} {(j_2-j_1-1)!}}\cdots {\frac {(x-Y_{i_k})^{j_{k+1}-j_k-1}} {(j_{k+1}-j_k-1)!}}\bigg]\\
	&\leq& \E{\frac {{Y_{i_1}}^{j_1-1}} {(j_1-1)!}}\E {\frac {(Y_{i_2}-Y_{i_1})^{j_2-j_1-1}} {(j_2-j_1-1)!}}\cdots \E{\frac {(x-Y_{i_k})^{j_{k+1}-j_k-1}} {(j_{k+1}-j_k-1)!}}
\end{eqnarray*}
where $Y_0=0, Y_1, \cdots, Y_{L-1}, Y_L=x$ are distributed as the order statistics of $(L-1)$ i.i.d.\ uniform $[0, x]$ random variables. Therefore, we have
\begin{eqnarray}\label{product-of-F}
&&\E(Z_{N,x,*}(\vec {0}, v_{i_1}, v_{i_2}, \ldots, v_{i_k},\vec {1})\mid A_{P})\nonumber\\
& = & \sum\limits_{\substack{P'\in \mathcal{P},\\P' \text { intersects } P \text { at } \vec {0}, v_{i_1}, v_{i_2}, \ldots, v_{i_k},\vec {1}}}\P(A_{P'}\mid A_{P})\nonumber\\
&\leq & \sum\limits_{\substack{P'\in \mathcal{P},\\P' \text { intersects } P \text { at } \vec {0}, v_{i_1}, v_{i_2}, \ldots, v_{i_k},\vec {1}}}\prod_{\ell = 1}^{k+1} \E{\frac {(Y_{i_\ell}-Y_{i_{\ell-1}})^{j_\ell- j_{\ell-1}-1}} {(j_\ell- j_{\ell-1}-1)!}}\nonumber\\
&\leq & \prod_{\ell = 1}^{k+1} F(v_{i_{\ell-1}}, v_{i_\ell})
\end{eqnarray}
where $F(v_{i_{\ell-1}}, v_{i_\ell})$ is defined as follows.
\begin{defn}
	For $u,v\in H_N$, we say a path $P^*$ connecting $u$ to $v$ is a good segment from $u$ to $v$, if there exists at least one good path whose subpath from $u$ to $v$ is $P^*$. For any good path $P=v_0, v_1, \ldots , v_L$ and $0\leq i< j\leq L$, let $F(v_i,v_j)=\E G(v_i,v_j, Y_i,Y_j)$ where $G(v_i,v_j,y_i,y_j)$ is the conditional expectation of the number of good accessible segments from $v_i$ to $v_j$, given that $X_i = y_i$ and $X_j = y_j$. 
\end{defn}
\noindent Now summing inequality \eqref{product-of-F} over $i_1 , i_2 , \ldots , i_k$ and $k$, we have
\begin{equation}\label{eq-Z-F}
\E(Z_{N,x,*} \mid A_{P})\leq \sum_{k, i_1 , i_2 , \ldots , i_k}\prod_{\ell=1}^{k+1} F(v_{i_{\ell-1}}, v_{i_\ell})\,.
\end{equation}
We can further split the sum on the right hand side into two parts, according to whether $\max \{i_1 , (i_2-i_1) , \ldots , (L-i_k)\}>L/2$ (i.e. whether the longest segment on $P$ is longer than $L/2$). That is,
\begin{eqnarray}
&&\sum_{k,i_1,i_2,\ldots, i_k}\prod_{\ell=1}^{k+1} F(v_{i_{\ell-1}}, v_{i_\ell}) \nonumber\\
& = &\sum\limits_{\substack{k,i_1,i_2,\ldots, i_k,\\ \max \{i_1,(i_2-i_1),\ldots, (L-i_k)\}>L/2}}\prod_{\ell=1}^{k+1} F(v_{i_{\ell-1}}, v_{i_\ell}) + \sum\limits_{\substack{k,i_1,i_2,\ldots, i_k,\\ \max \{i_1,(i_2-i_1),\ldots, (L-i_k)\}\leq L/2}}\prod_{\ell=1}^{k+1} F(v_{i_{\ell-1}}, v_{i_\ell})\nonumber\\
&\leq & \Big(\sum_{d=0}^{L/2} \sum_{d_1+d_2=d} F(v_{d_1},v_{L-d_2})\Big)\prod_{j=0}^{L-1}  (\sum_{i=1}^{\frac L 2}F(v_j,v_{j+i}) )+\prod_{j=0}^{L-1} (\sum_{i=1}^{\frac L 2}F(v_j,v_{j+i}))\,. \label{eq-F}
\end{eqnarray}

To justify the last inequality, we first point out that $F(v_j,v_{j+1})$ is always 1 because the Hamming distance between a pair of vertices on a good path is 1 if and only if these two vertices are neighboring each other on the path. Given any $k$ and $0<i_1<i_2<\cdots<i_k<L$, we define $u_j(k, i_1,i_2,\ldots,i_k)$ for $j=0,1,\ldots, L-1$ as:
\begin{equation*}
u_j(k, i_1,i_2,\ldots,i_k)=\begin{cases}
v_{i_{\ell+1}}\,, & \mbox { if } j=i_\ell \mbox{ for some } 1\leq \ell\leq k \mbox{ and } i_{\ell+1}-i_{\ell}>1\\
v_{j+1}\,, & \mbox{ otherwise }
\end{cases}
\end{equation*}
Thus for any $k$ and $0<i_1<i_2<\cdots<i_k<L$
$$\prod_{\ell=1}^{k+1} F(v_{i_{\ell-1}}, v_{i_\ell})=\prod_{j=0}^{L-1} F(v_j,u_j)\,.$$
Moreover, it is not hard to verify that $\vec u:=(u_0,u_1,\cdots,u_{L-1})$ is an injective function of $(k, i_1, i_2, \ldots, i_k)$, i.e., for any $(k,i_1,i_2,\ldots,i_k)\neq(k',i'_1,i'_2,\ldots, i'_{k'})$ such that $0<i_1<i_2<\cdots<i_k<L$ and $0<i'_1<i'_2<\cdots<i'_{k'}<L$, $u_j(k, i_1,i_2,\ldots,i_k)=u_j(k',i'_1,i'_2,\ldots, i'_{k'})$ cannot hold for all $j=0,1,\ldots, L-1$. Therefore
\begin{align*}
\sum\limits_{\substack{k,i_1,i_2,\ldots, i_k,\\ \max \{i_1,(i_2-i_1),\ldots, (L-i_k)\}\leq L/2}}\prod_{\ell=1}^{k+1} F(v_{i_{\ell-1}}, v_{i_\ell})&= \sum\limits_{\substack{k,i_1,i_2,\ldots, i_k,\\ \max \{i_1,(i_2-i_1),\ldots, (L-i_k)\}\leq L/2}}\prod_{j=0}^{L-1} F(v_j,u_j)\\
&\leq \prod_{j=0}^{L-1} (\sum_{i=1}^{\frac L 2}F(v_j,v_{j+i}))\,.
\end{align*}
The other part of the inequality can be obtained similarly.

The following two lemmas are useful for bounding $\E (Z_{N, x, *} \mid A_P)$.
\begin{lemma}
	\label{prop-longest-segment}
	For any sufficiently small but fixed number $\epsilon>0$, there exist $C_2>0$ and an integer $N'>0$ which both depend only on $\epsilon$, such that for all $|x - x_0| \leq \epsilon^2$, $N>N'$ and any good path $P$ we have $\sum_{d=0}^{L/2} \sum_{d_1+d_2=d}F(v_{d_1},v_{L-d_2})\leq C_2N(\sinh x)^{N-1}\cosh x$.
\end{lemma}

\begin{lemma}
	\label{prop-less-than-half-segment}
	For any sufficiently small but fixed number $\epsilon>0$, there exist $C_3>0$ and an integer $N'>0$ which both depend only on $\epsilon$, such that for all $|x - x_0| \leq \epsilon^2$, $N>N'$, any good path $P$ and any $j$ we have $\sum_{i=1}^{\frac L 2}F(v_j,v_{j+i})\leq 1+{\frac {C_{3}} N}$.
\end{lemma}

\begin{corollary}
	\label{cor-second-moment}
	For any sufficiently small but fixed number $\epsilon>0$, there exist $C_4>0$ and an integer $N'>0$ which both depend only on $\epsilon$,
	such that for all $|x - x_0| \leq \epsilon^2$ and $N>N'$
	$$\E Z_{N,x,*}^2\leq (C_4N\sinh^{N-1}x\cosh x+C_4)N\sinh^{N-1}x\cosh x \,.$$
\end{corollary}
\begin{proof}
	Substituting the bounds from Lemmas ~\ref{prop-longest-segment} and ~\ref{prop-less-than-half-segment} into \eqref{eq-F} and using \eqref{eq-Z-F}, we see that
	\begin{align*}
	\E(Z_{N,x,*} \mid A_{P}) &\leq \sum_{k,i_1,i_2,\ldots, i_k}\prod_{\ell=1}^{k+1} F(v_{i_{\ell-1}}, v_{i_\ell}) \nonumber\\
	&\leq (C_2N(\sinh x)^{N-1}\cosh x+1)(1+{\frac {C_{3}} N})^{(1+\epsilon)\alpha N} \nonumber\\
	&\leq (C_2N(\sinh x)^{N-1}\cosh x+1)e^{C_3(1+\epsilon)\alpha}\,.
	\end{align*}
	Substituting the above inequality into \eqref{eq-second-moment} and applying the inequality
	$$\sum_{P\in \mathcal{P}}\P(A_{P})=\E Z_{N,x,*}\leq \E Z_{N,x}\leq N(\sinh x)^{N-1}\cosh x$$ (here the last inequality follows from Corollary \ref{cor-first-moment}), we complete the proof of the corollary.\qed
\end{proof}
In order to prove Lemmas ~\ref{prop-longest-segment} and ~\ref{prop-less-than-half-segment}, we need the following lemma.
\begin{lemma}\label{fact-monotone-decreasing}
	Suppose that $N\geq 7$, $s\geq 1$.  Let $g(y, s) = (\sinh y)^s(\cosh y)^{N-s}$. Then $\frac{\partial g}{\partial y}(y, s)$ is decreasing in $s$ for all fixed $y>0$.
\end{lemma}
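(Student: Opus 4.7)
The plan is to reduce the claim to a single one-variable inequality via two differentiations, monotonicity arguments in $s$ and $N$, and a clean substitution $t = \tanh y$.

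First I would compute
$$\frac{\partial g}{\partial y}(y,s) = (\sinh y)^{s-1}(\cosh y)^{N-s-1}\bigl[s + N\sinh^2 y\bigr],$$
combining the two terms from the product rule via $\cosh^2 y - \sinh^2 y = 1$. Next, treating $s$ as a continuous parameter and using $\partial_s[(\sinh y)^{s-1}(\cosh y)^{N-s-1}] = (\sinh y)^{s-1}(\cosh y)^{N-s-1}\ln(\tanh y)$ together with $\partial_s[s + N\sinh^2 y] = 1$, I would obtain
$$\frac{\partial^2 g}{\partial s\,\partial y}(y,s) = (\sinh y)^{s-1}(\cosh y)^{N-s-1}\Bigl\{1 - \ln(\coth y)\,[s + N\sinh^2 y]\Bigr\}.$$
Since the prefactor is strictly positive, the mixed partial is $\leq 0$ if and only if
$$(s + N\sinh^2 y)\ln(\coth y) \geq 1. \tag{$\ast$}$$

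Now comes the monotonicity reduction. For $y>0$ we have $\ln(\coth y)>0$, and the bracket in $(\ast)$ is increasing in $s$ (with coefficient $1$) and in $N$ (with coefficient $\sinh^2 y\geq 0$). Hence it suffices to verify $(\ast)$ at the smallest allowed values $s=1$, $N=7$, i.e.
$$(1 + 7\sinh^2 y)\ln(\coth y) \geq 1 \qquad \text{for all } y > 0. \tag{$\ast\ast$}$$

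For the last step I would substitute $t = \tanh y \in (0,1)$, so that $\sinh^2 y = t^2/(1-t^2)$ and $\ln(\coth y) = -\ln t$. Then $(\ast\ast)$ becomes
$$(-\ln t)(1 + 6t^2) \geq 1 - t^2 \qquad \text{for all } t \in (0,1). \tag{$\ast\ast\ast$}$$
I would prove $(\ast\ast\ast)$ by splitting into two overlapping ranges. For $t \in (0, 1/e]$, we have $-\ln t \geq 1$ and $1 + 6t^2 \geq 1$, so the left-hand side is at least $1 \geq 1 - t^2$. For $t \in [1/6, 1)$, apply the elementary bound $-\ln t \geq 1 - t$ to reduce the claim to $(1-t)(1+6t^2) \geq (1-t)(1+t)$, which (dividing by $1-t>0$) is just $6t \geq 1$. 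Since $1/6 < 1/e$, these two ranges together cover $(0,1)$.

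The only step that is not purely mechanical is the verification of $(\ast\ast\ast)$; the two-case split avoids any messy critical-point analysis and exploits only the standard inequality $-\ln t \geq 1 - t$ on $(0,1)$.
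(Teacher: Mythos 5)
Your proposal is correct, and the reduction to the key inequality $(s + N\sinh^2 y)\ln(\coth y) \geq 1$ coincides exactly with the paper's (the paper phrases it as $(\coth y)^{s+N\sinh^2 y}\geq e$, which is the same after exponentiating). The difference lies entirely in how that inequality is verified. The paper splits on whether $\coth y \geq e$: in the first case it drops the $N\sinh^2 y$ term and uses $s\geq 1$; in the second case it invokes the claim that $(\coth y)^{\sinh^2 y}$ is increasing in $y$ (stated without proof), evaluates at the threshold $y_0 = \arccoth e$, and raises to the seventh power. You instead observe that the left side of $(\ast)$ is increasing in both $s$ and $N$, so one only needs the extremal case $s=1$, $N=7$; then the substitution $t=\tanh y$ turns this into the elementary inequality $(1+6t^2)(-\ln t)\geq 1-t^2$ on $(0,1)$, which your two-range split handles using only $-\ln t\geq 1$ for $t\leq 1/e$ and $-\ln t\geq 1-t$ for $t\geq 1/6$. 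Your route is more self-contained: it avoids the paper's unproved monotonicity claim for $(\coth y)^{\sinh^2 y}$ and replaces the numerical evaluation $e^{1/(e^2-1)}\approx 1.17$ with a clean algebraic check. Both are valid; yours is arguably preferable as written.
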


\begin{proof}
	By a direct calculation
	\begin{align*}
	\frac{\partial g}{\partial y}(y, s)&=(\sinh y)^s(\cosh y)^{N-s}(s\coth y+(N-s)\tanh y) \\
	&=(\sinh y)^{-1}(\cosh y)^{N-1}(\tanh y)^s(s+N(\sinh y)^2)\,.
	\end{align*} Therefore it suffices to show that $(\tanh y)^s(s+N(\sinh y)^2)$ is decreasing in $s$. Taking the partial derivative with respect to $s$ we get 
	$$\frac{\partial }{\partial s}[(\tanh y)^s(s+N(\sinh y)^2)]=(\tanh y)^s+(\ln \tanh y)(\tanh y)^s(s+N(\sinh y)^2)\,,$$ so we only need to show that $(\coth y)^{(s+N(\sinh y)^2)}\geq e$. If $\coth y\geq e$, then plainly we have $(\coth y)^{(s+N(\sinh y)^2)}\geq (\coth y)^s \geq \coth y \geq e$. On the other hand, if $\coth y < e$, then $y>\arccoth e:=y_0$. Since $(\coth y)^{(\sinh y)^2}$ is increasing in $y$, we have $(\coth y)^{(\sinh y)^2}\geq (\coth y_0)^{(\sinh y_0)^2}=e^{\frac 1 {e^2-1}}\approx 1.17$. Therefore we have $(\coth y)^{(s+N(\sinh y)^2)}\geq (\coth y)^{7(\sinh y)^2}\geq (\coth y_0)^{7(\sinh y_0)^2}>e$ in this case.\qed
\end{proof}

\paragraph{Proof of Lemma~\ref{prop-longest-segment}}
	For $d_1$ and $d_2$ such that $d_1+d_2=d$, it is clear that the Hamming distance $H(v_{d_1}, v_{L-d_2})$ between $v_{d_1}$ and $v_{L-d_2}$  is greater than or equal to $N-d$. Therefore, by \eqref{eq-upper-bound-first-moment} and Lemma~\ref{fact-monotone-decreasing}, we have
	\begin{eqnarray*}
		F(v_{d_1},v_{L-d_2})
		&=& \E G(v_{d_1},v_{L-d_2}, Y_{d_1}, Y_{L-d_2})\\
		&\leq&\E ((\sinh y)^{H(v_{d_1}, v_{L-d_2})}(\cosh y)^{N-H(v_{d_1}, v_{L-d_2})})'|_{y=Y_{L-d_2}-Y_{d_1}}\\
		&\leq& \E ((\sinh y)^{N-d}(\cosh y)^d)'|_{y=Y_{L-d_2}-Y_{d_1}}\\
		&=& \E ((\sinh y)^{N-d}(\cosh y)^d)'|_{y=x-Y_d}
	\end{eqnarray*}
	where the last equality is because the distribution of $Y_{L-d_2}-Y_{d_1}$ does not depend on  $(d_1, d_2)$ provided the value of $d=d_1+d_2$.
	Writing out the derivative in the last step, we have the following estimate
	\begin{eqnarray*}
		F(v_{d_1},v_{L-d_2})&\leq &\E ((\sinh y)^{N-d-1}(\cosh y)^{d-1}((N-d)(\cosh y)^2+d(\sinh y)^2))|_{y=x-Y_d}\\
		&\leq& \E ((\sinh y)^{N-d-1}(\cosh y)^{d-1}N (\cosh y)^2)|_{y=x-Y_d}\\
		&\leq& N (\cosh x)^2\E (\sinh(x-Y_d))^{N-d-1}(\cosh x)^{d-1}\,.
	\end{eqnarray*}
	Since $\sinh(x-y)\leq\sinh x-{\frac {\sinh x} x}y\text{ for }0\leq y\leq x$, we have further
	\begin{eqnarray}
	F(v_{d_1},v_{L-d_2})&\leq& N (\cosh x)^2\E (\sinh x-{\frac {\sinh x}  x}Y_d)^{N-d-1}(\cosh x)^{d-1} \nonumber\\
	&=& N (\cosh x)^2 (\sinh x)^{N-d-1}(\cosh x)^{d-1}\E(1-{\frac {Y_d} x})^{N-d-1}\,. \label{eq-F-bound}
	\end{eqnarray}
	It remains to bound $\E(1-{\frac {Y_d} x})^{N-d-1}$.  Since $1-{\frac {Y_d} x}$ is the $(L-d)$th order statistic of $(L-1)$ i.i.d.\ uniform $[0,1]$ random variables, it has a $\text{Beta}(L-d,d)$ distribution. Thus by the moments of Beta-distributed random variables (or applying \eqref{eq-beta-distribution}) we have
	\begin{equation}\label{eq-Y}
	\E(1-{\frac {Y_d} x})^{N-d-1}=\prod_{r=0}^{N-d-2}{\frac {L-d+r} {L+r}}
	\end{equation}
	which can be further bounded by
	\begin{align}\label{eq-Y-bound}
	\prod_{r=0}^{N-d-2}{\frac {L-d+r} {L+r}}&\leq (1-\frac d {L+N-d-2})^{N-d-1} \nonumber\\
	&\leq (e^{-{\frac {N-d-1} {L+N-d-2}}})^d\leq ({\frac {0.995} {\coth x}})^d
	\end{align}
	for $d\leq 0.32N$, $\epsilon$ sufficiently small and $N$ sufficiently large (recall that $L\in[\alpha(1-\epsilon)N,\alpha(1+\epsilon)N]$ for a good path). Here we used the inequality $e^{-\frac{1-0.32}{\alpha+1-0.32}}\leq \frac {0.994} {\coth x_0}$ (by brute force calculation).
	
	For $0.32N \leq d \leq \alpha(1/2+\epsilon)N$, set $t=d/N$ and $s = L/N$. Then by Stirling's formula
	\begin{eqnarray*}
		\prod_{r=0}^{N-d-2}{\frac {L-d+r} {L+r}} \leq  C_5\prod_{r=1}^{N-d}{\frac {L-d+r} {L+r}}
		&\leq&  C_6{\frac {(L+N-2d)^{L+N-2d}L^L} {(L-d)^{L-d}(L+N-d)^{L+N-d}}}\\
		&=& C_6\Big(\big({\frac {(1+s-2t)^{1+s-2t}s^s}{(s-t)^{s-t}(1+s-t)^{1+s-t}}}\big)^{\frac 1 t}\Big)^{d}.
	\end{eqnarray*}
	Another brute force calculation gives
	$$ \Big({\frac {(1+\alpha-2t)^{1+\alpha-2t}\alpha^\alpha}{(\alpha-t)^{\alpha-t}(1+\alpha-t)^{1+\alpha-t}}}\Big)^{\frac 1 t} \leq \frac {0.999} {\coth x_0}$$
	for $t \leq \alpha(1/2+\epsilon)$ and $\epsilon$ sufficiently small. Since the function $h(y,t)$ given by
	$$h(y,t) = \Big({\frac {(1 + y - 2t)^{1 + y - 2t}y^y}{(y - t)^{y - t}(1 + y - t)^{1 + y - t}}}\Big)^{\frac 1 t}$$
	is uniformly continuous with respect to $(y,t)$ on $[1.0, 1.5] \times [0.2, 0.8]$, we have for $\epsilon$ sufficiently small (so that $s$ is sufficiently close to $\alpha$) and $0.32\leq t \leq \alpha(1/2+\epsilon)$
	$$\Big({\frac {(1 + s - 2t)^{1 + s - 2t}s^s}{(s - t)^{s - t}(1 + s - t)^{1 + s - t}}}\Big)^{\frac 1 t} \leq \frac {0.9999} {\coth x_0}\,.$$
	In addition, for $\epsilon$ sufficiently small, the right hand side of the above inequality is at most $0.99999 / \coth x$. So we get $\prod_{r=0}^{N-d-2}{\frac {L-d+r} {L+r}}\leq C_6({\frac {0.99999} {\coth x}})^d$ in this case. Combined with \eqref{eq-F-bound}, \eqref{eq-Y} and \eqref{eq-Y-bound}, this completes the proof of the lemma.\qed

\paragraph{Proof of Lemma~\ref{prop-less-than-half-segment}}
	Recall that $P = v_0, v_1, \ldots, v_L$ is a good path of length $L$. For an arbitrary $j$, we will bound $F(v_j, v_{j+i})$ in a number of regimes depending on the value of $i$, as follows.
	
	\bigskip
	\noindent{\bf Case (a): $i=1$.} Since for any good path (or good segment), the Hamming distance between a pair of vertices on the path is 1 if and only if these two vertices are neighboring each other on the path, we have $F(v_j,v_{j+1})=1$.
	
	\bigskip
	\noindent{\bf Case (b): $i=2$.} The Hamming distance between $v_j$ and $v_{j+2}$ is precisely 2 (since $P$ is good), and thus the length of any good segment connecting $v_j$ to $v_{j+2}$ is either 2 or 4. There are at most 2 such segments of length 2, and the probability for each of them to be accessible given $X_j=y_j$ and $X_{j+2}=y_{j+2}$ is $(y_{j+2}-y_j)$. Similarly, there are at most $(N{4\choose 2}2!)$ such segments of length 4, and the probability for each of them to be accessible given $X_j=y_j$ and $X_{j+2}=y_{j+2}$ is $\frac{(y_{j+2}-y_j)^3}{3!}$. Therefore,
	\begin{align*}
	G(v_j,v_{j+2},y_j,y_{j+2})&\leq 2(y_{j+2}-y_j)+(N{4\choose 2}2!)\frac{(y_{j+2}-y_j)^3}{3!}\\
	&= 2(y_{j+2}-y_j)+2N(y_{j+2}-y_j)^3.
	\end{align*}
	Combined with \eqref{eq-beta-distribution}, this yields that
	$$F(v_j, v_{j+2}) \leq 20/N\mbox{ for sufficiently large } N\,.$$
	
	\bigskip
	\noindent{\bf Case (c): $i=3$.} The Hamming distance between $v_j$ and $v_{j+3}$ is precisely 3 (since $P$ is good), and thus the length of any good segment connecting $v_j$ to $v_{j+3}$ is either 3 or 5. Similar to the previous case, we have
	\begin{align*}
	G(v_j,v_{j+3},y_j,y_{j+3})&\leq 3(y_{j+3}-y_j)^2+(N{5\choose 2}3!)\frac{(y_{j+3}-y_j)^4}{4!}\\
	&=3(y_{j+3}-y_j)^2+(\frac 5 2 N)(y_{j+3}-y_j)^4.
	\end{align*}
	Combined with \eqref{eq-beta-distribution}, this yields that
	$$F(v_j, v_{j+3}) \leq 1000 \cdot N^{-2}\mbox{ for sufficiently large } N\,.$$
	
	\bigskip
	\noindent{\bf Case (d): $4\leq i\leq N^{\frac 1 5}$}. By the definition of good path and good segment again, we see that all the possible values of the pair $(H(v_j,v_{j+i}), L(v_j,v_{j+i}))$ (where $L(v_j,v_{j+i})$ is the length of a good segment connecting $v_j$ to $v_{j+i}$) are $(i,i)$, $(i,i+2)$, $(i-2,i-2)$ and $(i-2,i)$. Therefore $G(v_j,v_{j+i},y_j,y_{j+i})$ is at most  $$i(y_{j+i}-y_j)^{i-1}+{\frac {N{i+2\choose 2}i!}{(i+1)!}}(y_{j+i}-y_j)^{i+1}+(i-2)(y_{j+i}-y_j)^{i-3}+ {\frac {N{i\choose 2}(i-2)!}{(i-1)!}}(y_{j+i}-y_j)^{i-1}\,.$$
	Combined with \eqref{eq-beta-distribution}, this yields that
	$$F(v_j, v_{j+4}) \leq  10^4 \cdot N^{-1} \mbox{ for sufficiently large } N\,,$$
	$$F(v_j, v_{j+5}) \leq  10^4 \cdot N^{-1} \mbox{ for sufficiently large } N\,,$$
	$$F(v_j, v_{j+6}) \leq  10^4 \cdot N^{-1} \mbox{ for sufficiently large } N$$
	and
	$$F(v_j, v_{j+i}) \leq  10^4 \cdot (i(\frac i N)^4+ Ni (\frac i N)^6) \leq 10^4\cdot N^{-2} \mbox{ for sufficiently large } N$$
	when $7\leq i\leq N^{\frac 1 5}$.
	
	\bigskip
	\noindent{\bf Case (e): $N^{\frac 1 5}\leq i\leq L/2$.} Recall the definitions of $\epsilon_1, \epsilon_2, \epsilon_3$ in \eqref{eq-def-epsilons}. By the definition of good path, we have ${\frac i {\alpha+\epsilon_3}}\leq H(v_j,v_{j+i})\leq(1/2+\epsilon_1)N$. Therefore (by the definition of good path again) any good segment that connects $v_j$ to $v_{j+i}$ must have length $L(v_j,v_{j+i})\leq\alpha(1/2+\epsilon_2)N$, so that $L(v_j,v_{j+i})$ also satisfies $L(v_j,v_{j+i})\leq (\alpha+\epsilon_3) H(v_j,v_{j+i})\leq (\alpha+\epsilon_3)i$. By Part \eqref{prop-spacings-moment-3} of Proposition ~\ref{prop-spacings-moment}, we have
	$$\E(Y_{j+i}-Y_{j})^{\ell-1}\leq C\sqrt{1+\alpha+\epsilon_3}(x{\frac {i-1} {L-1}}{\frac {(1+(\alpha+\epsilon_3))^{1+1/(\alpha+\epsilon_3)}} e})^{\ell-1} \text{ for } \ell\leq (\alpha+\epsilon_3)(i-1)+1\,.$$
	Therefore by \eqref{lem-M-k-n2} and Lemma~\ref{fact-monotone-decreasing}, we have
	\begin{align*}
	F(v_j,v_{j+i})
	&= \sum_{\substack {P^* \mbox{ is a good segment of length } \ell\\ \mbox { connecting } v_j \mbox { to } v_{j+i}}} \frac {\E(Y_{j+i}-Y_{j})^{\ell-1}}{(\ell-1)!}\\
	&\leq C\sqrt{1+\alpha+\epsilon_3}((\sinh y)^{H(v_j,v_{j+i})}(\cosh y)^{N-H(v_j,v_{j+i})})'|_{y=x{\frac {i-1} {L-1}}{\frac {(1+(\alpha+\epsilon_3))^{1+1/(\alpha+\epsilon_3)}} e}}\\
	&\leq C\sqrt{1+\alpha+\epsilon_3}((\sinh y)^{\frac i {\alpha+\epsilon_3}}(\cosh y)^{N-{\frac i {\alpha+\epsilon_3}}})'|_{y=x{\frac {i-1} {L-1}}{\frac {(1+(\alpha+\epsilon_3))^{1+1/(\alpha+\epsilon_3)}} e}}\\
	&\leq C_7N^2(\sinh y)^{\frac i {\alpha+\epsilon_3}}(\cosh y)^{N-{\frac i {\alpha+\epsilon_3}}}|_{y=x{\frac i{L-1}}{\frac {(1+(\alpha+\epsilon_3))^{1+1/(\alpha+\epsilon_3)}} e}}\,.
	\end{align*}
	
	Set $a={\frac {N(\alpha+\epsilon_3)} {L-1} }$, $c=x{\frac {(1+(\alpha+\epsilon_3))^{1+1/(\alpha+\epsilon_3)}} e}$, and  $c_0=x_0{\frac {(1+\alpha)^{1+1/\alpha}} e}\approx 1.39$. Clearly $c$ will be sufficiently close to $c_0$ if $\epsilon$ is sufficiently small. Let $t={\frac i {L-1}}$ (so that ${\frac {N^{\frac 1 5}} L}\leq t\leq 1/2$) and $h(t):=(\sinh(ct))^{\frac t  {\alpha+\epsilon_3}}(\cosh(ct))^{\frac N {L-1}-\frac t{\alpha+\epsilon_3}}$. Then the preceding inequality can be rewritten as $F(v_j,v_{j+i})\leq C_7N^2(h(t))^{L-1}$. In order to estimate $F(v_j, v_{j+i})$, we analyze the behavior of the function $h(t)$ as follows. By straightforward computation, we have
	\begin{align*}
	(\alpha+\epsilon_3)\ln h(t)&=t\ln\sinh(ct)+(a-t)\ln\cosh(ct)\,,\\
	((\alpha+\epsilon_3)\ln h(t))'&=\ln\sinh(ct)-\ln\cosh(ct)+ct\coth(ct)+c(a-t)\tanh(ct)
	\end{align*}
	and
	\begin{align*}
	((\alpha+\epsilon_3)\ln h(t))'' &=c\coth(ct)-c\tanh(ct)+c\coth(ct)-c\tanh(ct)-{\frac {c^2t}{(\sinh(ct))^2}}+{\frac {c^2(a-t)}{(\cosh(ct))^2}} \\
	&\geq 2c(\coth(ct)-\tanh(ct))-c^2t({\frac 1 {(\sinh(ct))^2}}+{\frac 1 {(\cosh(ct))^2}})\\
	&={\frac c {(\sinh(ct))^2(\cosh(ct))^2}}(\sinh(2ct)-ct\cosh(2ct))>0
	\end{align*}
	for $t\leq 1/2$ (since $ct\leq c/2<0.8$).
	
	Therefore $(\alpha+\epsilon_3)\ln h(t)$, and consequently $h(t)$ is convex up to $t=1/2$. Thus we have $h(t)\leq \max(h({\frac {N^{\frac 1 5}} L}), h(1/2))$, and so $F(v_j,v_{j+i})\leq C_7N^2 \max ((h({\frac {N^{\frac 1 5}} L}))^{L-1},(h(1/2))^{L-1})$. However, since $(h(1/2))^{2(\alpha+\epsilon_3)}=\sinh(\frac c 2)(\cosh(\frac c 2))^{2(\alpha+\epsilon_3){\frac N {L-1}}-1}$ which is sufficiently close to $\sinh(\frac {c_0} 2)\cosh(\frac {c_0} 2)=\frac 1 2\sinh(c_0)<1$ if $\epsilon$ is sufficiently small and $N$ is sufficiently large, we have $h(1/2)\leq p$ where $p$ is a constant strictly less than 1. Thus, $(h(1/2))^{L-1}\leq p^{L-1}$. On the other hand, $(h({\frac {N^{\frac 1 5}} L}))^{L-1} \leq (N^{-{\frac 3 5}})^{N^{\frac 1 5}}(1+N^{-\frac 8 5})^N$ for sufficiently large $N$. Thus we have for $N$ sufficiently large,
	$$F(v_j, v_{j+i}) \leq C_7 N^2 \max(p^{L-1}, (N^{-{\frac 3 5}})^{N^{\frac 1 5}}(1+N^{-\frac 8 5})^N) \,.$$
	
	\noindent {\bf Conclusion.}
	Summing $F(v_j, v_{j+i})$ over $1\leq i\leq L/2$ and applying the bounds we obtained in Cases (a), (b), (c), (d) and (e), we see that $\sum_{i=1}^{\frac L 2}F(v_j,v_{j+i})\leq 1+{\frac {C_3} N}$ for some $C_3>0$, completing the proof of the lemma.\qed

\begin{proposition}\label{eq-augmenting}
	There exists $0\leq K<1$ such that, if $\liminf\limits_{N\to\infty}\P(Z_{N,x_c+\epsilon_N}>0)\geq C$ for some constant $C\geq 0$ whenever $N\epsilon_N\to \infty$, then whenever $N\epsilon_N\to \infty$ we have 
	$$\liminf\limits_{N\to\infty}\P(Z_{N,x_c+\epsilon_N}>0)\geq 1-(1-C)K\,.$$
\end{proposition}
\begin{proof}
	Our strategy basically follows that of \cite{HM14}. First we pick four vertices $a_1$, $a_2$, $b_1$, $b_2$ satisfying: $a_1$ and $a_2$ are neighbors of $\vec{0}$ and have a value in $[0,\epsilon_N/3]$, $b_1$ and $b_2$ are neighbors of $\vec{1}$ and have a value in $[x-\epsilon_N/3,x]$, and none of the four pairs $(a_i,b_j)$ are antipodal. Since $N\epsilon_N\to\infty$, this can be achieved with probability $1-o_N(1)$.
	
	Without loss of generality assume that the only coordinates of $a_1, a_2, b_1$ and $b_2$ that are different from $\vec{0}$ or $\vec{1}$ are $1,2,3$ and $4$, respectively. Let $\tilde H_1$ and $\tilde H_2$ be the $(N-2)$ dimensional sub-hypercubes of $\{0,1\}^N$ formed by $a_1,b_1$ and $a_2,b_2$,  respectively. That is, $\tilde H_1$ is the sub-hypercube with the first coordinate being 1 and the third coordinate being 0, and $\tilde H_2$ is the sub-hypercube with the second coordinate being 1 and the fourth coordinate being 0. Let $H_2'$ be $\tilde H_2\setminus \tilde H_1$. Denote by $p_{\tilde H_1}$ and $p_{H_2'}$ the probabilities that there is an accessible path in $\tilde H_1$ (from $a_1$ to $b_1$) and $H_2'$ (from $a_2$ to $b_2$) respectively. From the disjointness (and hence independence) of $\tilde H_1$ and $H_2'$ we have $\P(Z_{N,x_c+\epsilon_N}>0)\geq1-(1-p_{\tilde H_1})(1-p_{H_2'})-o_N(1)$. Clearly $p_{\tilde H_1}\geq \P(Z_{N-2,x_c+\epsilon_N/3}>0)\geq C-o_N(1)$. 
	
	It remains to show that $p_{H_2'}$ is bounded from below by a positive constant $1-K$. To this end, we note that if we only consider the good path in $\tilde H_2$ (from $a_2$ to $b_2$) which only updates Coordinate $1$ and  Coordinate $3$ once and Coordinate $3$ is updated before Coordinate $1$ (that is, in the associated sequence the numbers 1 and 3 occur precisely once each and 3 occurs ahead of 1), such path must be contained in $H_2'$. Clearly, the number of such accessible paths has second moment less than  $\E Z^2_{N-2,x_c+\epsilon_N/3,*}$ and first moment within an absolute multiplicative constant of $\E Z_{N-2,x_c+\epsilon_N/3,*}$ (indeed, the first moment is at least $C_1(N-2)\sinh^{N-3}(x)\cosh x\cdot (\frac x {\sinh x})^2\cdot \frac 1 2$ where $x=x_c+\epsilon_N/3$). Combined with Lemma~\ref{lem-first-moment} and Corollary~\ref{cor-second-moment}, this yields that $p_{H_2'} \geq 1-K-o_N(1)$ for some constant $K<1$. This completes the proof of the proposition.\qed
\end{proof}
\paragraph{Proof of \eqref{eq-mainthm-lower}: antipodal case} Applying Proposition~\ref{eq-augmenting} recursively (starting from $C=0$) completes the proof of \eqref{eq-mainthm-lower}.\qed

At the end of this section, we provide
\paragraph{Proof of \eqref{eq-mainthm-critical}: antipodal case}
	For the lower bound, it suffices to consider $x=x_c - \Delta/N$. By Remark \ref{remark-3}, we have in this case $N(\sinh x)^{N-1}\cosh x \geq m_1(\Delta)$ where $m_1(\Delta)>0$ depends only on $\Delta$. Applying the second moment method and using Lemma ~\ref{lem-first-moment} and Corollary ~\ref{cor-second-moment}, we obtain that (for sufficiently large $N$)
	$$\P(Z_{N,x}>0)\geq \P(Z_{N,x,*}>0)\geq {\frac {(\E Z_{N,x,*})^2} {\E Z_{N,x,*}^2}}\geq c_1(\Delta)\,,$$
	where $c_1(\Delta)>0$ depends only on $\Delta$.
	
	For the upper bound, it suffices to consider $x = x_c + \Delta/N$. Let $K>0$ be a large number depending on $\Delta$ that we specify later. The idea is to condition on the values of the neighbors of $\vec{0}$. Let $u_1, u_2, \ldots, u_N$ be these neighbors. For $1\leq i \leq N$ and ${\frac K N}\leq y_i\leq x$, we upper bound the conditional probability that $\vec{1}$ is accessible from $u_i$ given $X_{u_i}=y_i$ by the corresponding first moment, which by \eqref{eq-upper-bound-first-moment} can be further bounded by $((\sinh t)^{N-1}\cosh t)'|_{t=x-y_i}\leq 2N(\sinh(x-y_i))^{N-2}$. Therefore
	\begin{eqnarray*}
		\P(Z_{N,x}=0)&\geq& \int_{\frac K  N}^1\int_{\frac K N}^1\cdots \int_{\frac K N}^1[1-2N(\sinh(x-y_1))^{N-2}1_{y_1\leq x}-\cdots-2N(\sinh(x-y_N))^{N-2}1_{y_N\leq x}]\,dy_1\,dy_2\cdots\,dy_N\\
		&=&(1-{\frac K N})^N-(1-{\frac K N})^{N-1}\int_{\frac K N}^1 2N^2(\sinh(x-y_1))^{N-2}1_{y_1\leq x}\,dy_1\,,
	\end{eqnarray*}
	where
	\begin{align*}
	\int_{\frac K N}^1 2N^2(\sinh(x-y_1))^{N-2}1_{y_1\leq x}\,dy_1
	&=\int_{\frac K N}^x 2N^2(\sinh(x-y_1))^{N-2}\,dy_1\\
	&=\int_{0}^{x_0-{\frac {\sqrt2} 2}{\frac {\ln N} N}+{\frac {\Delta} N}-{\frac K N}}2N^2(\sinh y)^{N-2}\,dy\\
	&\to \sqrt 2 e^{\sqrt2(\Delta-K)}
	\end{align*}
	Here the last step follows from \cite[problem 213 (in Part Two Chapter 5 section 2)]{PS98} by setting $\varphi(x)=1, h(x)=\ln \sinh x, a=0, \xi=x_0, \alpha=-{\frac {\sqrt2} 2}, \beta=\Delta-K$. Therefore $\liminf\limits_{N\to\infty}\P(Z_{N,x}=0)\geq e^{-K}(1-\sqrt 2 e^{\sqrt2(\Delta-K)})$, and we are done by choosing $K$ to be a large number depending on $\Delta$.\qed

\section{Accessibility percolation: general case} \label{sec:general}
Since most of our proof in the antipodal case carries over to the general case, in the following proof for the general case we will emphasize the parts that require nontrivial modification.

Fix $0<\beta<1$ throughout this section. Recall from the statement of Theorem~\ref{thm-main} that $f(x)=(\sinh x)^\beta(\cosh x)^{1-\beta}$, that $x_0$ is the unique root of $f(x)=1$ and that $x_c = x_0 - {\frac 1 {f'(x_0)}}{\frac {\ln N} N}$. We have
$$f'(x)=(\beta \coth x+(1-\beta)\tanh x)(\sinh x)^\beta(\cosh x)^{1-\beta},$$
so that $f'(x_0)=\beta \coth x_0+(1-\beta)\tanh x_0$. In addition, it is straightforward to check that $0<f''(x_0)<\infty$. The proof of \eqref{eq-mainthm-upper} resembles that in the antipodal case.

\paragraph{Proof of \eqref{eq-mainthm-upper}: general case} In light of \eqref{lem-M-k-n2} we denote by 
	$$M_{N,\beta,x} := \big((\sinh x)^{\beta N}(\cosh x)^{(1-\beta)N}\big)'=((f(x))^N)'=N(f(x))^{N-1}f'(x)\,.$$
	We have $M_{N,\beta,x}\asymp N (f(x))^N$ for, say $|x- x_0| \leq 1/10$. Since $\P(Z_{N,x}>0)$ is monotone in $x$, we can assume without loss of generality that $\epsilon_N\leq N^{-2/3}$. With this assumption, we have for $x=x_c\pm\epsilon_N=x_0 - {\frac 1 {f'(x_0)}}{\frac {\ln N} N}\pm\epsilon_N$,
	$$(x-x_0)^2 = ({\frac 1 {f'(x_0)}}{\frac {\ln N} N}\pm\epsilon_N)^2 = o(1/N)$$
	and thus
	\begin{align*}
	f(x) &= f(x_0) + f'(x_0)(x-x_0) +o(1/N)\\
	&=1-{\frac {\ln N} N}\pm f'(x_0)\epsilon_N+o(1/N)\,.
	\end{align*}
	Therefore, $M_{N,\beta,x_c-\epsilon_N}\to 0$ and $M_{N,\beta,x_c+\epsilon_N}\to \infty$ as $N\to\infty$. Combined with \eqref{eq-upper-bound-first-moment}, it gives that $\E Z_{N, x_c - \epsilon_N} \to 0$ as $N\to\infty$, yielding \eqref{eq-mainthm-upper}.\qed

We next turn to prove \eqref{eq-mainthm-lower}. To this end, we first need to revise the definition of good path. Let
$$\gamma = \beta x_0\coth x_0+(1-\beta)x_0\tanh x_0 = x_0f'(x_0)$$
as in statement \eqref{eq-Case-1} (it will play the role of $\alpha$). Also in the general case, by a similar calculation as equation \eqref{eq-prob-odd}, we see that the definition of $g(t)$ in \eqref{def-g(t)} should be modified as
$$g(t): = \beta{\frac {\sinh(x_0t)\cosh(x_0(1-t))}{\sinh x_0}}+(1-\beta){\frac {\sinh(x_0(1-t))\sinh(x_0t)}{\cosh x_0}}$$
so that $g(t)N$ still means the ``expected Hamming distance traveled by a path in time $t$''.
In addition, for a pair of vertices $u$ and $v$, we let $H'(u,v)$ be their Hamming distance restricted to the first $\beta N$ coordinates (i.e., the number of the first $\beta N$ coordinates at which $u$ differs from $v$).

\begin{defn}[general case]\label{def-good-general}
	Let $\epsilon>0$ be a sufficiently small fixed number to be selected and set $\epsilon_4 = \epsilon^{1/8}$. We say a path (or the associated update sequence) $v_0=\vec{0}_N = (0,0,\cdots,0), v_1, \ldots, v_{L-1}, v_L=(\vec 1_{\beta N}, \vec 0_{N-\beta N}) = (1,\cdots,1,0,\cdots,0)$ is good if the following holds:
	\begin{enumerate}[(a)]
		\item The total number of updates of the first $\beta N$ coordinates lies within
		$$[\beta x_0\coth x_0(1-\epsilon)N,\beta x_0\coth x_0(1+\epsilon)N]$$
		and the total number of updates of the last $(1-\beta)N$ coordinates lies within $$[(1-\beta)x_0\tanh x_0(1-\epsilon)N, (1-\beta)x_0\tanh x_0(1+\epsilon)N]\,.$$
		\item $H(v_i,v_j)=|i-j|, \text{ if } |i-j|=1,2,3.$
		\item For $|i-j|>3$ we have\\
		$H(v_i,v_j)=|i-j| \text{ or } |i-j|-2,  \text{ if } 4\leq|i-j|\leq N^{\frac 1 5};\\
		H'(v_i,v_j)\leq (1/2+\epsilon_1)\beta N, \text{ if } |i-j|\leq\gamma(1/2+\epsilon)N;\\
		H'(v_i,v_j)> (1/2+\epsilon_1)\beta N, \text{ if } |i-j|>\gamma(1/2+\epsilon_2)N;\\
		H(v_i,v_j)\geq {\frac {2g(1/2) |i-j|} {\gamma+\epsilon_3}}, \text{ if } N^{\frac 1 5}\leq |i-j|\leq \gamma(1/2+\epsilon_2)N.$
		\item Let $D(v_0,v_i)$ be the number of updates of the first $\beta N$ coordinates among the first $i$ updates, and $D(v_{L-i},v_L)$ be the number of updates of the first $\beta N$ coordinates among the last $i$ updates. Then both $D(v_0,v_i)$ and $D(v_{L-i},v_L)$ are less than or equal to $\delta i$ for any $i\leq L/2$, where $\delta:= \frac {\beta \coth x_0} {\beta \coth x_0+(1-\beta)\tanh x_0}+\epsilon_4$.
	\end{enumerate}
\end{defn}
As in the antipodal case, it is clear that a good path is self-avoiding. In addition, we have $L\in[\gamma(1-\epsilon)N,\gamma(1+\epsilon)N]$ by Property (a).
\begin{lemma}\label{lem-first-moment-general}
	For any sufficiently small but fixed number $\epsilon>0$, there exist $C_1'>0$ and an integer $N'>0$ which both depend only on $\epsilon$, such that for all $|x - x_0| \leq \epsilon^2$ and $N>N'$ we have
	\begin{equation}
	\E Z_{N,x,*}\geq C_1'M_{N,\beta,x}=C_1'N(f(x))^{N-1}f'(x)\,.
	\end{equation}
\end{lemma}
\begin{proof}
	Recall the definition of $\tilde \mu_{N,\beta}$ introduced in the statement \eqref{eq-Case-1}: For $i\in \{1, \ldots, \beta N\}$, let $U_i$ be i.i.d.\ random variables distributed as $F_1$, and independently for $i\in \{\beta N + 1, \ldots, N\}$, let $U_i$ be i.i.d.\ random variables distributed as $F_2$. Given the values of $U_1, \ldots, U_N$, we let $(A_1, \ldots, A_L)$ (where $L = \sum_{i=1}^N U_i$) be a sequence uniformly at random subject to $|\{1\leq j \leq L: A_j = i\}| = U_i$. Let $\tilde \mu_{N,\beta}$ be the probability measure of the random sequence $(A_1, \ldots, A_L)$ thus obtained.
	
	Following a similar argument given at the beginning of the proof of Lemma~\ref{lem-first-moment}, we see that it suffices to show that under $\tilde \mu_{N,\beta}$ the set of good sequences has probability bounded from below by a constant.
	
	We first observe that Properties (a) and (c) in Definition \ref{def-good-general} can be satisfied by a random sequence under $\tilde \mu_{N,\beta}$ with probability tending to 1 as $N\to \infty$. This can be derived quite similarly as Case 2 and Case 3 in the proof of Lemma~\ref{lem-first-moment}, with the last requirement in Property (c) hinted by the following inequality
	$${\frac {g(t)} t}\geq {\frac {g(1/2)} {1/2}}, \mbox{ if } 0 \leq t\leq{\frac 1 2}\,.$$
	
	In addition, we claim that  Properties (b) and (d) in Definition \ref{def-good-general} can be satisfied simultaneously by a random sequence under $\tilde \mu_{N,\beta}$ with probability bounded from below. Altogether, this would imply the desired bound in the lemma.
	
	To verify this claim, we show that the update sequence $(A_1, \ldots, A_L)$ can be obtained by the following two-step procedure, where in each step one property can be satisfied with probability bounded from below. Let us recall the notation that $\mathcal{F} = \sigma(U_1, U_2, \ldots, U_N)$. For convenience, we write  $L_1=\sum_{i=1}^{\beta N}U_i$ and $L_2=\sum_{i=\beta N+1}^{N}U_i$.

	As the first step, conditioning on $\mathcal{F}$, we choose $L_1$ indices $i_1<i_2<\cdots<i_{L_1}$ uniformly from $\{1,2,\ldots,L\}$ and call them type 1 (they represent updates of the first $\beta N$ coordinates). Denote by $\mathcal I=\{i_1,i_2,\cdots,i_{L_1}\}$ the collection of these type 1 indices. Let $j_1<j_2<\cdots<j_{L_2}$ be the rest of the indices and call them type 2 (they represent updates of the last $(1-\beta) N$ coordinates). In the following $\P$ refers to this (conditional) probability space (so that $L_1$ and $L_2$ should be seen as constants).
	
	Denote by $\mathcal E$ the following event:
	$$|\{1,\cdots,i\} \cap \mathcal I |, |\{L-i+1,\cdots, L \} \cap \mathcal I |  \leq \big(\frac {\beta \coth x_0} {\beta \coth x_0+(1-\beta)\tanh x_0}+\epsilon_4\big)i \mbox{ for all } 1\leq i \leq L/2$$
	and by $\mathcal E'$ the following event:
	$$|\{1,\cdots,i\} \cap \mathcal I |, |\{L-i+1,\cdots, L \} \cap \mathcal I |  \leq (\frac {L_1}{L_1+L_2}+\epsilon) i \mbox{ for all } 1\leq i \leq L/2\,.$$
	We want to show that Property (d) can be satisfied with probability bounded from below in this step, that is $\P(\mathcal E)\geq c$ for a constant $c>0$. Without loss we can assume that Property (a) holds (since it is $\mathcal F$-measurable and can be satisfied with high probability), so that we have 
	$$\tfrac {L_1}{L_1+L_2}+\epsilon\leq \tfrac {\beta x_0\coth x_0(1+\epsilon)N}{\beta x_0\coth x_0(1-\epsilon)N+(1-\beta)x_0\tanh x_0(1-\epsilon)N}+\epsilon\leq \tfrac {\beta \coth x_0} {\beta \coth x_0+(1-\beta)\tanh x_0}+\epsilon_4$$ for sufficiently small $\epsilon$ and therefore $\mathcal E'  \subseteq \mathcal E$. It thus remains to lower bound $\P(\mathcal E')$.
	
	To this end, for each $1\leq i\leq L$, we let $T_i = 1_{\{i \text{ is of type 1}\}}$. Then $T_1, T_2, \ldots, T_L$ can be viewed as a sample without replacement from $L_1$ 1's and $L_2$ 0's. By Hoeffding's inequality in the case of sampling without replacement \cite[Theorem 4]{H63}, we have for any $n$,
	$$\P(\frac {\sum_{i=1}^n T_i} {n} \geq \frac {L_1}{L_1+L_2}+\epsilon)\leq \exp (-2n\epsilon^2)$$
	and
	$$\P(\frac {\sum_{i=L-n+1}^L T_i} {n} \geq \frac {L_1}{L_1+L_2}+\epsilon)\leq \exp (-2n\epsilon^2)\,.$$
	By a union bound over $M \leq n \leq \frac L 2$ (where $M$ depending only on $\epsilon$ is chosen later), we have
	$\P(\mathcal E_1)\geq 1-\frac {2\exp (-2\epsilon^2 M)}{1-\exp (-2\epsilon^2)}$, where
	$$\mathcal E_1 = \Big\{\frac {\sum_{i=1}^n T_i} {n} \leq \frac {L_1}{L_1+L_2}+\epsilon \mbox{ and } \frac {\sum_{i=L-n+1}^L T_i} {n} \leq \frac {L_1}{L_1+L_2}+\epsilon \mbox{ for all }M \leq n \leq \frac L 2\Big\}\,.$$
	Let $\mathcal K$ be the set of all positive integer pairs $(k_1, k_2)$  such that $\frac {M-k_1} M\leq \frac {L_1}{L_1+L_2}+\epsilon$ and $\frac {M-k_2} M\leq \frac {L_1}{L_1+L_2}+\epsilon$. It is clear that
	\begin{equation}\label{mathcal-K-1}
	\mathcal E_1 = \bigsqcup_{(k_1, k_2)\in \mathcal K} \mathcal E_1 \cap \{\sum_{i=1}^M T_i=M-k_1\} \cap \{\sum_{i=L-M+1}^L T_i=M-k_2\}\,.
	\end{equation}
	For $(k_1, k_2)\in \mathcal K$, define
	\begin{align*}
	\mathcal E_2(k_1)& =\{T_i=0 \mbox{ for }1\leq i\leq k_1\} \cap \{ T_i=1 \mbox{ for } k_1+1\leq i\leq M \}\,,\\
	\mathcal E_3(k_2) & =\{T_i=0 \mbox{ for } L-k_2+1\leq i \leq L\} \cap \{ T_i=1 \mbox{ for } L-M+1\leq i\leq L-k_2 \}\,.
	\end{align*}
	Then for all $(k_1, k_2)\in \mathcal K$, on the event $\mathcal E_2(k_1) \cap \mathcal E_3(k_2)$ we have $\frac {\sum_{i=1}^n T_i} {n} \leq \frac {L_1}{L_1+L_2}+\epsilon$ and $\frac {\sum_{i=L-n+1}^L T_i} {n} \leq \frac {L_1}{L_1+L_2}+\epsilon$ for all $1\leq n\leq M$. Therefore, we have
	\begin{equation}\label{mathcal-K-2}
	\mathcal E'  \supseteq \bigsqcup_{(k_1, k_2) \in \mathcal K} \mathcal E_1 \cap \mathcal E_2(k_1) \cap \mathcal E_3(k_2)\,.
	\end{equation}
	However,
	\begin{align}
	\P(\mathcal E_1 \cap \mathcal E_2(k_1) \cap \mathcal E_3(k_2))&={M\choose{k_1}}^{-1}{M\choose{k_2}}^{-1}\P(\mathcal E_1, \sum_{i=1}^M T_i=M-k_1, \sum_{i=L-M+1}^L T_i=M-k_2) \nonumber\\
	&\geq 2^{-2M}\P(\mathcal E_1, \sum_{i=1}^M T_i=M-k_1, \sum_{i=L-M+1}^L T_i=M-k_2)\,. \label{eq-E-1-2-3}
	\end{align}
	Summing \eqref{eq-E-1-2-3} over all $(k_1, k_2)\in \mathcal K$ and using \eqref{mathcal-K-1} and \eqref{mathcal-K-2}, we deduce that 
	\begin{equation}
	\P(\mathcal E')\geq 2^{-2M} \P(\mathcal E_1)\geq 2^{-2M}(1-\frac {2\exp (-2\epsilon^2 M)}{1-\exp (-2\epsilon^2)})\,.\label{eq-prob-E}
	\end{equation}
	By \eqref{eq-prob-E} and choosing $M$ depending on $\epsilon$ (e.g. $M=-\frac {10} {\epsilon^2}\ln \epsilon$), we have proved that in the first step, Property (d) can be satisfied with probability bounded from below by a number depending only on $\epsilon$.
	
	Now, conditioning on the previous step, let $(B_1, B_2, \ldots, B_{L_1})$ be a sequence uniformly at random subject to $|\{1\leq j \leq L_1: B_j = i\}| = U_i$ for $i=1,2,\ldots,\beta N$, and \emph{independently} let $(C_1, C_2, \ldots, C_{L_2})$ be a sequence uniformly at random subject to $|\{1\leq j \leq L_2: C_j = i\}| = U_i$ for $i=\beta N+1,\beta N+2,\ldots, N$. Let $A_{i_k}=B_k$ for $1\leq k\leq L_1$ and $A_{j_k}=C_k$ for $1\leq k\leq L_2$ (recall that $i_k$'s and $j_k$'s are sampled in the previous step). Thanks to the general proof of Case 1 in Lemma~\ref{lem-first-moment}, we have that with high probability (with respect to the $U_i$'s), we have $B_i\neq B_{i+1}$ and $B_i\neq B_{i+2}$ hold for all $1\leq i\leq L_1$ with at least constant probability; and with high probability (with respect to the $U_i$'s), we have $C_i\neq C_{i+1}$ and $C_i\neq C_{i+2}$ hold for all $1\leq i\leq L_2$ with at least constant probability. However, note that $B_i\neq B_{i+1}$, $B_i\neq B_{i+2}$ for all $1\leq i \leq L_1$ and $C_i\neq C_{i+1}$, $C_i\neq C_{i+2}$ for all  $1\leq i\leq L_2$ together would imply $A_i\neq A_{i+1}$ and $A_i\neq A_{i+2}$ for all $1\leq i\leq L$, which corresponds to Property (b). By the (conditional) independence of $(B_1, B_2, \ldots, B_{L_1})$ and $(C_1, C_2, \ldots, C_{L_2})$, we see that in the second step, Property (b) can be satisfied with probability bounded from below by a constant.
	
	Finally, it is clear that the sequence $(A_1, \ldots, A_L)$ obtained by this two-step procedure has the same distribution as under $\tilde \mu_{N,\beta}$ originally. This completes the verification of our claim and therefore the lemma.\qed
\end{proof}

\begin{lemma}\label{lem-longest-segment-general}
	For any sufficiently small but fixed number $\epsilon>0$, there exist $C_2'>0$ and an integer $N'>0$ which both depend only on $\epsilon$, such that for all $|x - x_0| \leq \epsilon^2$, $N>N'$ and any good path $P = v_0, v_1, \ldots, v_L$ we have
	$$\sum_{d=0}^{L/2} \sum_{d_1+d_2=d}F(v_{d_1},v_{L-d_2})\leq C_2'Nf(x)^N \asymp C_2'N(f(x))^{N-1}f'(x)\,.$$
\end{lemma}

\begin{proof}
	We continue to let $Y_0=0, Y_1, \ldots, Y_{L-1}, Y_L=x$ be distributed as the order statistics of $(L-1)$ i.i.d.\ uniform $[0, x]$ random variables. For $d_1$ and $d_2$ such that $d_1+d_2=d$, by Property (d) of Definition~\ref{def-good-general} we have that the Hamming distance $H(v_{d_1}, v_{L-d_2})$ between $v_{d_1}$ and $v_{L-d_2}$ is at least $\beta N-D(v_0,v_{d_1})-D(v_{L-d_2},v_L)$, which is at least $\beta N-\delta d$. Therefore, by \eqref{eq-upper-bound-first-moment} and Lemma~\ref{fact-monotone-decreasing}, we have
	\begin{align}\label{eq-1}
	F(v_{d_1},v_{L-d_2})
	&= \E G(v_{d_1},v_{L-d_2}, Y_{d_1}, Y_{L-d_2})\nonumber\\
	&\leq\E ((\sinh y)^{H(v_{d_1}, v_{L-d_2})}(\cosh y)^{N-H(v_{d_1}, v_{L-d_2})})'|_{y=Y_{L-d_2}-Y_{d_1}} \nonumber \\
	&\leq\E ((\sinh y)^{\beta N-\delta d}(\cosh y)^{(1-\beta)N+\delta d})'|_{y=Y_{L-d_2}-Y_{d_1}} \nonumber \\
	&=\E ((\sinh y)^{\beta N-\delta d}(\cosh y)^{(1-\beta)N+\delta d})'|_{y=x-Y_d}
	\end{align}
	where the last equality is because the distribution of $Y_{L-d_2}-Y_{d_1}$ does not depend on  $(d_1, d_2)$ provided the value of $d=d_1+d_2$. Since $x-Y_d$ is the $(L-d)$th order statistic of $(L-1)$ i.i.d.\ uniform $[0,x]$ random variables, $\frac {x- Y_d} x $ has a $\text{Beta}(L-d,d)$ distribution. Thus, the density of $x-Y_d$ is ${\frac 1 x}({\frac y x})^{L-d-1}(1-{\frac y x})^{d-1}{\frac {(L-1)!}{(L-d-1)!(d-1)!}}$ for $y\in[0,x]$. Therefore
	\begin{align}\label{eq-2}
	&\E ((\sinh y)^{\beta N-\delta d}(\cosh y)^{(1-\beta)N+\delta d})'|_{y=x-Y_d} \nonumber\\
	=&\int_0^x ((\sinh y)^{\beta N-\delta d}(\cosh y)^{(1-\beta)N+\delta d})'{\frac 1 x}({\frac y x})^{L-d-1}(1-{\frac y x})^{d-1}{\frac {(L-1)!}{(L-d-1)!(d-1)!}}\,dy\,.
	\end{align}
	
	We will split the above integral into two parts according to whether $y$ is smaller or greater than $\frac x 2$, and denote by $\mathcal J_1(d)$ the integral over $[0, \frac x2]$ and by $\mathcal J_2(d)$ the integral over $[\frac x2, x]$.
	On one hand, for $y\in[0, {\frac x 2}]$, by Lemma~\ref{fact-monotone-decreasing} we have
	\begin{eqnarray*}
		&&((\sinh y)^{\beta N-\delta d}(\cosh y)^{(1-\beta)N+\delta d})' \leq ((\sinh y)^{\beta N-\delta\frac L 2}(\cosh y)^{(1-\beta)N+\delta \frac L 2})' \\
		&=& (\sinh y)^{\beta N-\delta\frac L 2-1}(\cosh y)^{(1-\beta)N+\delta \frac L 2-1} ((\beta N-\delta\frac L 2)\cosh y+ ((1-\beta)N+\delta \frac L 2)\sinh y)\,.
	\end{eqnarray*}
	Since $\cosh y\leq \cosh (\frac x 2)$ and $\sinh y\leq \sinh (\frac x 2)$ for $y\in[0, {\frac x 2}]$, we have
	\begin{eqnarray*}
		((\sinh y)^{\beta N-\delta d}(\cosh y)^{(1-\beta)N+\delta d})' 
		&\leq& C_8N(\sinh (\frac x 2))^{\beta N-\delta\frac L 2-1}(\cosh (\frac x 2))^{(1-\beta)N+\delta \frac L 2-1}\\
		&\leq& C_8N(\sinh (\frac x 2))^{\beta N-\delta{\frac {\gamma(1+2\epsilon)N} 2}}(\cosh (\frac x 2))^{(1-\beta)N+\delta{\frac {\gamma(1+\epsilon)N} 2}}\,,
	\end{eqnarray*}
	where the last inequality follows from Property (a) of Definition~\ref{def-good-general}.
	Therefore
	\begin{eqnarray}\label{eq-5}
	\sum_{d=1}^{\frac L 2}(d+1)\mathcal J_1(d) &\leq& C_9N^3(\sinh (\frac x 2))^{\beta N-\delta{\frac {\gamma(1+2\epsilon)N} 2}}(\cosh (\frac x 2))^{(1-\beta)N+\delta{\frac {\gamma(1+\epsilon)N} 2}} \nonumber\\
	&\leq& C_9 N^3 r^N (\sinh x)^{\beta N}(\cosh x)^{(1-\beta)N}\,,
	\end{eqnarray}
	where $0<r<1$ is a constant that depends only on $\beta$. Here we used the fact (by brute force computation) that
$$\frac{(\sinh(\frac x 2))^{\beta-\delta{\frac {\gamma(1+2\epsilon)} 2}}(\cosh(\frac x 2))^{(1-\beta)+\delta{\frac {\gamma(1+\epsilon)} 2}}}{(\sinh x)^\beta(\cosh x)^{1-\beta}}\leq r<1\,.$$
	
	On the other hand, for $y\in[{\frac x 2},x]$, we have $\coth y\leq \coth ({\frac x 2})$ and $\tanh y\leq \tanh (x)$. Thus
	\begin{eqnarray*}
		&&((\sinh y)^{\beta N-\delta d}(\cosh y)^{(1-\beta)N+\delta d})'{\frac 1 x}\\
		&=&(\sinh y)^{\beta N-\delta d}(\cosh y)^{(1-\beta)N+\delta d}((\beta N-\delta d)\coth y+ ((1-\beta)N+\delta d)\tanh y){\tfrac 1 x}\\
		&\leq& C_{10}N (\sinh y)^{\beta N-\delta d}(\cosh y)^{(1-\beta)N+\delta d}\\
		&\leq& C_{11}N((\sinh y)^{\beta N}(\cosh y)^{(1-\beta)N})(\coth y)^{\delta (d-2)}\,.
	\end{eqnarray*}
	Therefore, the integrand of \eqref{eq-2} is smaller than $C_{11}N((\sinh y)^{\beta N}(\cosh y)^{(1-\beta)N}) \varphi(x, y, d, \beta, N, L)$ for $y\in [\frac x2, x]$, where
	\begin{eqnarray*}
\varphi(x, y, d, \beta, N, L)= (\coth y)^{\delta (d-2)}({\frac y x})^{L-d-1}(1-{\frac y x})^{d-1}{\frac {(L-1)!}{(L-d-1)!(d-1)!}}\,,
	\end{eqnarray*}
	and thus
	\begin{eqnarray}\label{eq-3}
	\sum_{d=1}^{\frac L 2}(d+1)  \mathcal J_2(d) \leq C_{11}N\int_{\frac x 2}^x ((\sinh y)^{\beta N}(\cosh y)^{(1-\beta)N})\sum_{d=1}^{\frac L 2}  (d+1) \varphi(x, y, d, \beta, N, L)\,dy\,.
	\end{eqnarray}
	Now for $d=1$, we have
	\begin{eqnarray}\label{d is 1}
	(d+1) \varphi(x, y, d, \beta, N, L) = 2(\tanh y)^\delta ({\tfrac y x})^{L-2}(L-1) 
	\leq C_{12} ({\tfrac y x})^{L-2}(L-1)\,.
	\end{eqnarray}
	In addition, for $d\geq 2$, we have
	\begin{eqnarray*}
		&&(d+1) \varphi(x, y, d, \beta, N, L) \\
		&=& (1-{\frac y x})(\coth y)^{\delta (d-2)}(1-{\frac y x})^{d-2}({\frac y x})^{L-d-1} {\frac {(L-3)!} {(L-d-1)!(d-2)!}} (L-1)(L-2)\frac {(d+1)}{(d-1)} \\
		&\leq& 3(1-{\frac y x}) L^2 \cdot  [((\coth y)^{\delta} )(1-{\frac y x})]^{d-2}({\frac y x})^{L-d-1} {\frac {(L-3)!} {(L-d-1)!(d-2)!}} \,.
	\end{eqnarray*}
	Observing that the second factor of the product in the previous line is a binomial term, we have
	\begin{eqnarray} \label{d geq 2}
	\sum_{d=2}^{\frac L 2}(d+1) \varphi(x, y, d, \beta, N, L) \leq 3 (1-{\tfrac y x})L^2\cdot ((\coth y)^\delta(1-{\tfrac y x})+{\tfrac y x})^{L-3}\,.
	\end{eqnarray}
	Combining \eqref{d is 1} and \eqref{d geq 2} and using Property (a) of Definition~\ref{def-good-general}, we have
	$$\sum_{d=1}^{\frac L 2}(d+1) \varphi(x, y, d, \beta, N, L) \leq C_{12}({\frac y x})^{\gamma(1-2\epsilon)N}(L-1)+3L^2(1-{\frac y x})((\coth y)^\delta(1-{\frac y x})+{\frac y x})^{\gamma(1+\epsilon)N}\,.$$
	Therefore \eqref{eq-3} translates to
	\begin{eqnarray}\label{eq-4}
	\sum_{d=1}^{\frac L 2}(d+1)\mathcal J_2(d)&\leq & C_{11}N \int_{\tfrac x 2}^x ((\sinh y)^{\beta N}(\cosh y)^{(1-\beta)N})\cdot\nonumber\\
	&&(C_{12}({\tfrac y x})^{\gamma(1-2\epsilon)N}(L-1)+3L^2(1-{\tfrac y x})((\coth y)^\delta(1-{\tfrac y x})+{\tfrac y x})^{\gamma(1+\epsilon)N})\,dy\,.\nonumber
	\end{eqnarray}
	For convenience, let
	\begin{align*}
	\psi_1(y) &= \frac {((\sinh y)^{\beta}(\cosh y)^{1-\beta})\cdot({\frac y x})^{\gamma(1-2\epsilon)}} {(\sinh x)^{\beta}(\cosh x)^{1-\beta}}\,,\\
	\psi_2(y) &=\frac {((\sinh y)^{\beta}(\cosh y)^{1-\beta})\cdot((\coth y)^\delta(1-{\frac y x})+{\frac y x})^{\gamma(1+\epsilon)}}{(\sinh x)^{\beta}(\cosh x)^{1-\beta}}\,,\\
	\psi_3(y) &=\beta \ln \sinh y+(1-\beta) \ln \cosh y\,.
	\end{align*}
	We will show that for $i\in\{1,2\}$, we have $\psi_i(x)=1$ (which is trivial) and $\ln \psi_i(y)\leq -K(x-y)$ for $y\in [\frac x 2, x]$, where $K>0$ is a constant that only depends on $\beta$. These conditions on $\psi_1(y)$ and $\psi_2(y)$ will guarantee that both $\int_{\frac x 2}^x N(\psi_1(y))^N\,dy$ and $\int_{\frac x 2}^x N^2(x-y)(\psi_2(y))^N\,dy$ are bounded as $N\to\infty$, so that \eqref{eq-4} is bounded by $C_{13}N(\sinh x)^{\beta N}(\cosh x)^{(1-\beta)N}$.
	
	It is relatively easy to check that $\psi_1(y)$ satisfies the second condition (i.e. $\ln \psi_1(y)\leq -K(x-y)$), so we focus on verifying it for $\psi_2(y)$. To start with, we have
	$$\ln \psi_2(y)=(\psi_3(y)-\psi_3(x))+\gamma(1+\epsilon)\ln ((\coth y)^\delta(1-{\frac y x})+{\frac y x})\,.$$
	For the first part of the sum on the right hand side, i.e. $(\psi_3(y)-\psi_3(x))$, we can first compute the derivatives of $\psi_3(y)$ as follows:
	\begin{align*}
	\psi_3'(y)&=\beta \coth y+(1-\beta) \tanh y\,,\\
	\psi_3''(y)&=\beta (1-(\coth y)^2) + (1-\beta) (1-(\tanh y)^2)\,.
	\end{align*}
	Since $\coth y\geq \coth x\geq (\frac{1-\beta} \beta)^{\frac 1 4}$ for $y\leq x$, we have $\psi_3''(y)$ is increasing in $y$. Therefore, by Taylor's theorem (Lagrange form of the remainder) we find that for some  $ \xi \in [y, x]$, 
	\begin{align}\label{psi_2(y)-part1}
	\psi_3(y) &=\psi_3(x)+\psi_3'(x)(y-x)+\frac {\psi_3''(\xi)}2 (y-x)^2\nonumber\\
	&\leq \psi_3(x)+\psi_3'(x)(y-x)+\frac {\psi_3''(x)}2 (y-x)^2\,.
	\end{align}
	For the second part of the sum, i.e. $\gamma(1+\epsilon)\ln ((\coth y)^\delta(1-{\frac y x})+{\frac y x})$, we set $C(y):=(\coth y)^\delta-1$ and $\theta(y):=1-\frac y x$. Then 
	$$(\coth y)^\delta(1-{\frac y x})+{\frac y x}=1+(1-{\frac y x})C(y)=1+\theta(y)C(y)\,.$$ Clearly $0\leq \theta(y)C(y)\leq \theta(\frac x 2)C(\frac x 2)=\frac 1 2 ((\coth \frac x 2)^\delta-1)\leq 0.75$. Since $\ln (1+t)\leq t-\frac {t^2} 3$ for $0\leq t\leq 0.75$, we have
	\begin{equation}\label{psi_2(y)-part2}
	\ln ((\coth y)^\delta(1-{\frac y x})+{\frac y x})\leq \theta(y)C(y)-\frac {(\theta(y))^2(C(y))^2} 3 \,.
	\end{equation}
	Combining \eqref{psi_2(y)-part1} and \eqref{psi_2(y)-part2}, we have
	\begin{align*}
	\ln \psi_2(y) &\leq -\psi_3'(x)x (1-\frac y x)+\frac {\psi_3''(x)x^2}2 (1-\frac y x)^2+\gamma(1+\epsilon)\theta(y)C(y)-\gamma(1+\epsilon)(\theta(y))^2\frac {(C(y))^2} 3 \\
	&=\theta(y)\gamma(1+\epsilon)[-\frac {\theta(y)}3  (C(y))^2+C(y)-\frac 1 {\gamma(1+\epsilon)}(\psi_3'(x)x-\frac {\psi_3''(x)x^2}2 \theta(y))]\,.
	\end{align*}
	We wish to show that the factor in the square bracket above is less than some constant $-\eta$, where $\eta>0$ only depends on $\beta$, i.e. for any $y\in [\frac x 2, x]$
	$$-\frac {\theta(y)}3  (C(y))^2+C(y)-\frac 1 {\gamma(1+\epsilon)}(\psi_3'(x)x-\frac {\psi_3''(x)x^2}2 \theta(y))\leq-\eta\,.$$
	Set $c:=\frac {\psi_3''(x_0)x_0^2}{2\gamma}$. Since $|x-x_0|\leq\epsilon^2$, and $\epsilon$ can be made arbitrarily small, we only need to show that for some constant $\eta_1>0$ which only depends on $\beta$, for any $y\in [\frac x 2, x]$
	$$-\frac {\theta(y)} 3 (C(y))^2+C(y)-1+c\theta(y)\leq-\eta_1\,.$$
	
	To do this, we let $q(s):=-\frac {\theta(y)} 3 s^2+s-1+c\theta(y)$. Solving the quadratic equation $q(s)=0$ with respect to $s$, we get the smaller root (since $\theta(y)\in[0,1/2]$ for $y\in [\frac x 2, x]$ and $c>-1/3$, $q(s)$ always has two roots)
	$$r(y):=\frac {-1+\sqrt{1-\frac {4\theta(y)}3 (1-c\theta(y))}}{-\frac {2\theta(y)} 3}\,, \text{ for } y\in [\frac x 2, x]\,.$$
	We claim that we only need to show that for any $\frac x 2\leq y\leq x$, $C(y)\leq r(y)-\eta_2$ for some constant $\eta_2>0$ which only depends on $\beta$. Indeed, if this holds true, then from $q'(s)=-\frac {2\theta(y)} 3 s+1$, we see that $q'(C(y))=-\frac 2 3 \theta(y)C(y)+1\geq 0.5$ and $q'(C(y)+\eta_2)=q'(C(y))-\frac {2\theta(y)} 3 \eta_2\geq 0.5-\frac 2 3 \eta_2$. Consequently $0=q(r(y))\geq q(C(y)+\eta_2)\geq q(C(y))+\eta_2(0.5-\frac 2 3 \eta_2)$ and we can take $\eta_1=\eta_2(0.5-\frac 2 3 \eta_2)$.
	
	To this end, we first point out that $r(y)$ is convex in $y$ if $c<\frac 1 3$, $r(y)$ is concave in $y$ if $c>\frac 1 3$ and $r(y)\equiv 1$ if $c=\frac 1 3$. This can be seen by observing that $r=\frac {-1+\sqrt{1-\frac {4\theta}3 (1-c\theta)}}{-\frac {2\theta} 3}$ is the inverse function of $\theta=\frac {r-1}{\frac {r^2}3 -c}$, whose properties such as monotonicity and convexity are not hard to justify. Now if $c<\frac 1 3$, then by convexity of $r(y)$, we have
	$$r(y)\geq r'(\frac {3x} 4)(y-\frac {3x} 4)+r(\frac {3x} 4):=t(y)$$
	where $t(y)$ can be computed as
	$$t(y)= -\frac 1 x(\frac {120}{\sqrt{3c+24}}-24)(y-\frac {3x} 4)+6-\sqrt{3c+24}\,.$$
	Since $C(y)$ is convex in $y$, we only need to have $t(x) \geq C(x)+\eta_2$ and $t(\frac x 2) \geq C(\frac x 2)+\eta_2$, i.e.,
	\begin{equation}
	-\frac {30}{\sqrt{3c+24}}+12-\sqrt{3c+24} \geq (\coth x)^\delta-1+\eta_2
	\label{req-1}
	\end{equation}
	and
	\begin{equation}
	\frac {30}{\sqrt{3c+24}}-\sqrt{3c+24} \geq (\coth \frac x 2)^\delta-1+\eta_2\,.
	\label{req-2}
	\end{equation}
	If $c=\frac 1 3$, then $r(y)\equiv 1$, which is a degenerate case. If $c>\frac 1 3$, then since $r(y)$ is concave in $y$, we only need to have $r(x) \geq C(x)+\eta_2$ and $r(\frac x 2) \geq C(\frac x 2)+\eta_2$, i.e.,
	\begin{equation}
	1\geq (\coth x)^\delta-1+\eta_2
	\label{req-3}
	\end{equation}
	and
	\begin{equation}
	3-\sqrt{3(c+1)}\geq(\coth \frac x 2)^\delta-1+\eta_2\,.
	\label{req-4}
	\end{equation}
	All of the inequalities \eqref{req-1}, \eqref{req-2}, \eqref{req-3} and \eqref{req-4} boil down to comparisons of constants which only involve $x_0$ (since $|x-x_0|\leq\epsilon^2$ and $\epsilon$ can be made arbitrarily small), so we have finally shown that \eqref{eq-4} is bounded by $C_{13}N(\sinh x)^{\beta N}(\cosh x)^{(1-\beta)N}$.
	
	Combining \eqref{eq-1}, \eqref{eq-2}, \eqref{eq-5}, \eqref{eq-4} and the fact that $F(v_0,v_L)\leq N(f(x))^{N-1}f'(x)$ when $d=0$, we conclude that $\sum_{d=0}^{L\over2} \sum_{d_1+d_2=d}F(v_{d_1},v_{L-d_2})\leq C_2'N(\sinh x)^{\beta N}(\cosh x)^{(1-\beta)N}$ for some $C_2'>0$.\qed
\end{proof}

\begin{lemma}\label{lem-less-than-half-segment-general}
	For any sufficiently small but fixed number $\epsilon>0$, there exist $C_3'>0$ and an integer $N'>0$ which both depend only on $\epsilon$, such that for all $|x - x_0| \leq \epsilon^2$, $N>N'$, any good path $P$ and any $j$ we have $\sum_{i=1}^{\frac L 2}F(v_j,v_{j+i})\leq 1+{\frac {C_3'} N}$.
\end{lemma}
\begin{proof}
	The proof can be carried out in the same manner as that of Lemma~\ref{prop-less-than-half-segment}, except that the role of $\alpha+\epsilon_3$ in Case (e) there is now played by $\frac {\gamma+\epsilon_3} {2g(1/2)}$. We thus omit the details.\qed
\end{proof}

\begin{corollary}
	\label{cor-second-moment-general}
	For any sufficiently small but fixed number $\epsilon>0$, there exist $C_4'>0$ and an integer $N'>0$ which both depend only on $\epsilon$,
	such that for all $|x - x_0| \leq \epsilon^2$ and $N>N'$
	$$\E Z_{N,x,*}^2\leq (C_4'N(\sinh x)^{\beta N}(\cosh x)^{(1-\beta)N}+C_4')N(\sinh x)^{\beta N}(\cosh x)^{(1-\beta)N}\,.$$
\end{corollary}

\begin{proof}
	This follows from Lemmas~\ref{lem-longest-segment-general} and \ref{lem-less-than-half-segment-general} in the same manner as Corollary \ref{cor-second-moment} follows from Lemmas~\ref{prop-longest-segment} and \ref{prop-less-than-half-segment}.\qed
\end{proof}

\begin{proposition}\label{prop-augmenting-general}
	There exists $0\leq K'<1$ such that, if $\liminf\limits_{N\to\infty}\P(Z_{N,x_c+\epsilon_N}>0)\geq C$ for some constant $C\geq 0$ whenever $N\epsilon_N\to \infty$, then whenever $N\epsilon_N\to \infty$ we have
	$$\liminf\limits_{N\to\infty}\P(Z_{N,x_c+\epsilon_N}>0)\geq 1-(1-C)K'\,.$$
\end{proposition}

\begin{proof}
	The basic idea is the same as Proposition \ref{eq-augmenting}. Fix a large integer $M$. We first choose vertices $A_1, \ldots A_M, B_1, \ldots, B_{M}$ and $C_1, \ldots, C_M, D_1, \ldots, D_M$ such that for $1\leq i \leq M$:
	\begin{itemize}
		\item The only coordinate at which $A_{i-1}$ and $A_i$ differ is $a_i$. The only coordinate at which $B_{i-1}$ and $B_i$ differ is $b_i$. The only coordinate at which $C_{i-1}$ and $C_i$ differ is $c_i$. The only coordinate at which $D_{i-1}$ and $D_i$ differ is $d_i$ (set $A_0=C_0=\vec{0}_N$ and $B_0=D_0=(\vec 1_{\beta N}, \vec 0_{N-\beta N})$ for convenience). 
		\item All of the $4M$ coordinates $a_i$, $b_i$, $c_i$ and $d_i$ are different and are among the first $\beta N$ coordinates.
		\item $X(A_i), X(C_i)\in [\frac {(i-1)\epsilon_N} {4M}, \frac {i\epsilon_N} {4M}]$ and  $X(B_i), X(D_i)\in [x-\frac {i\epsilon_N} {4M}, x-\frac {(i-1)\epsilon_N} {4M}]$.
	\end{itemize}
	Since $N\epsilon_N \to \infty$, this can be achieved with probability $1-o_N(1)$.
	
	Now let $M_2={\frac {(1-\beta)}\beta}2M$, and select distinct coordinates $e_1,e_2,\cdots, e_{M_2}$ and $f_1, f_2,\cdots, f_{M_2}$ arbitrarily among the last $(1-\beta)N$ coordinates. Let $\tilde H_1$ be the $(N-2M-M_2)$ dimensional sub-hypercube formed by $A_M$ and $B_M$ with the coordinates $e_1,e_2,\cdots, e_{M_2}$ being 0, i.e., 
	$$\tilde H_1 = \{\sigma\in H_N: \sigma_{e_i} = 0 \mbox{ for all } 1\leq i\leq M_2, \sigma_{a_i} = 1 \mbox{ for all } 1\leq i\leq M, \sigma_{b_i} = 0\mbox{ for all } 1\leq i\leq M\}\,.$$
	Similarly, let $\tilde H_2$ be the $(N-2M-M_2)$ dimensional sub-hypercube formed by $C_M$ and $D_M$ with the coordinates $f_1, f_2,\cdots, f_{M_2}$ being 0, i.e., 
	$$\tilde H_2 = \{\sigma\in H_N: \sigma_{f_i} = 0 \mbox{ for all } 1\leq i\leq M_2, \sigma_{c_i} = 1 \mbox{ for all } 1\leq i\leq M, \sigma_{d_i} = 0\mbox{ for all } 1\leq i\leq M\}\,.$$
	Let $H_2' = \tilde H_2\setminus \tilde H_1$. Denote by $p_{\tilde H_1}$ and $p_{H_2'}$ the probabilities that there is an accessible path in $\tilde H_1$ (from $A_M$ to $B_M$) and $H_2'$ (from $C_M$ to $D_M$) respectively. Since $\tilde H_1$ and $H_2'$ are disjoint, by independence we have $\P(Z_{N,x_c+\epsilon_N}>0)\geq1-(1-p_{H_1})(1-p_{H_2'})-o_N(1)$. From the construction above it is clear that we are reduced to accessibility percolation of dimension $(N-2M-M_2)$ (with the same $\beta$) with $x\geq x_c+\epsilon_N/2$, in either $\tilde H_1$ (from $A_M$ to $B_M$) or $\tilde H_2$ (from $C_M$ to $D_M$). Thus, 
	$$p_{\tilde H_1}\geq \P(Z_{N-2M-M_2,x_c+\epsilon_N/2}>0)\geq C-o_N(1)\,.$$
	
	To show that $p_{H_2'}$ is bounded from below by a positive constant $1-K'$, we only consider the good path in $\tilde H_2$ (from $C_M$ to $D_M$) which updates each of coordinates $a_1$ and $b_1$ precisely once and $b_1$ is updated before $a_1$. Such paths must be contained in $H_2'$. Clearly, the number of such accessible paths has second moment less than  
	$\E Z^2_{N-2M-M_2,x_c+\epsilon_N/2,*}$ and first moment within an absolute multiplicative constant of $\E Z_{N-2M-M_2,x_c+\epsilon_N/2,*}$ (or $M_{N-2M-M_2,\beta,x_c+\epsilon_N/2}$). Combined with Lemma~\ref{lem-first-moment-general} and Corollary~\ref{cor-second-moment-general}, this yields that $p_{H_2'} \geq 1-K'-o_N(1)$ for some constant $K'<1$. This completes the proof of the proposition.\qed
\end{proof}

\paragraph{Proof of \eqref{eq-mainthm-lower}: general case} Applying Proposition~\ref{prop-augmenting-general} recursively (starting from $C=0$) completes the proof of \eqref{eq-mainthm-lower}.\qed

\paragraph{Proof of \eqref{eq-mainthm-critical}: general case} The proof is basically the same as in the antipodal case except that for the upper bound, the role of $\sinh(x)$ is now played by $f(x)=(\sinh x)^{\beta}(\cosh x)^{1-\beta}$.\qed

\begin{acknowledgements}
The author would like to thank his advisor Jian Ding for suggesting the problems, numerous helpful discussions as well as careful editing and suggestions for writing. The author would like to thank Subhajit Goswami for pointing out \eqref{eq-second-moment} in a useful discussion and for his useful comments on an early version of the manuscript. The author would also like to thank Professor Steven Lalley for his advice on writing which greatly improved the manuscript.
\end{acknowledgements}


\end{document}